\DeclareMathOperator{\Ran}{Ran}
\DeclareMathOperator{\linspan}{span}
\DeclareMathOperator{\sign}{sign}
\newcommand{\bdf}{\mathbf{f}}
\newcommand{\bdg}{\mathbf{g}}
\newcommand{\bdG}{\mathbf{G}}
\newcommand{\bdL}{\mathbf{L}}
\newcommand{\bdn}{\mathbf{n}}
\newcommand{\bdN}{\mathbf{N}}
\newcommand{\bdu}{\mathbf{u}}
\newcommand{\bdv}{\mathbf{v}}
\newcommand{\bdx}{\mathbf{x}}
\newcommand{\R}{\mathbb{R}}
\newcommand{\Ntotal}{N_{\mathrm{total}}}
\newcommand{\equispaced}{equi-spaced}
\definecolor{cLA}{HTML}{000000}
\definecolor{cZE}{HTML}{00BB86}
\definecolor{cQR}{HTML}{005CB3}
\definecolor{cCL}{HTML}{A950AC}
\definecolor{cE2}{HTML}{EF4E79}
\definecolor{cE3}{HTML}{F97E38}
\definecolor{cE4}{HTML}{CCBB00}
\title{Initial Guesses for Sequences of Linear Systems in a
GPU-Accelerated Incompressible Flow Solver
\thanks{Submitted to the editors September 25, 2020.
\funding{This research was supported in part by the Exascale Computing Project
(17-SC-20-SC), a collaborative effort of the U.S. Department of Energy Office
of Science and the National Nuclear Security Administration.  This research was
also supported in part by the John K. Costain Faculty Chair in Science at
Virginia Tech.  The first author additionally acknowledges support through the
Research Initiation Program at the Naval Postgraduate School.}}}
\author{Anthony P. Austin\thanks{Dept.\ of Applied Mathematics, Naval Postgraduate School, 833 Dyer Rd., Monterey, CA 93940} \and Noel Chalmers\thanks{AMD Research, Advanced Micro Devices Inc., 7171 Southwest Pkwy., Austin, TX 78735} \and Tim Warburton\thanks{Department of Mathematics, Virginia Tech, 225 Stanger St., Blacksburg, VA 24061}}
\date{August 2020}
\begin{document}

\maketitle

\begin{abstract}
We consider several methods for generating initial guesses when iteratively
solving sequences of linear systems, showing that they can be implemented
efficiently in GPU-accelerated PDE solvers, specifically solvers for
incompressible flow.  We propose new initial guess methods based on stabilized
polynomial extrapolation and compare them to the projection method of Fischer
\cite{Fis1998}, showing that they are generally competitive with projection
schemes despite requiring only half the storage and performing considerably
less data movement and communication.  Our implementations of these algorithms
are freely available as part of the \texttt{libParanumal} collection of
GPU-accelerated flow solvers.
\end{abstract}

\begin{keywords}
initial guesses, iterative solvers, GPU-acceleration, partial differential equations, incompressible flow, projection, extrapolation, least-squares
\end{keywords}

\begin{AMS}
65F10, 65M22
\end{AMS}

\section{Introduction}
\label{SEC:Introduction}

Solving a time-dependent partial differential equation (PDE), discretized via
the method of lines using an implicit (or partially implicit) time-stepping
scheme boils down to solving a sequence of linear systems, one (sometimes more)
at each time step.  For large-scale simulations involving discretizations with
millions (or even billions) of degrees of freedom, these systems are often too
large even to form explicitly, much less solve directly.  Thus, one turns to
iterative methods \cite{Gre1997,Saa2003}.  In particular, Krylov subspace
iterations, which include the popular conjugate gradient (CG) \cite{HS1952} and
generalized minimal residual (GMRES) \cite{SS1986} algorithms, have proven
successful in this area.

With Krylov methods, the computational expense of solving a linear system is
simply the product of the number of iterations required and the cost of each
iteration.  Used as-is, these methods are prone to slow convergence and
stagnation; a good preconditioner is essential for keeping iteration counts
manageable.  Even though preconditioning raises the cost per iteration, the
substantial reduction in iterations it yields typically makes the trade-off
favorable.

Another way to reduce the number of iterations is to supply the method with a
good initial guess for the solution.  Good initial guesses are not available in
all contexts, but in a time-dependent PDE simulation, it is natural to use the
solution computed at the previous time step as a guess for the solution at the
current one.  This requires no effort to implement and can be effective if the
solution to the PDE varies smoothly in time.

In a 1998 paper, Fischer \cite{Fis1998} showed that one can form an even better
initial guess by combining the solutions at several previous time steps instead
of just one.  Specifically, his method takes as the initial guess the solution
to the system whose right-hand side is the orthogonal projection of the
right-hand side of the linear system at the current time step onto the span of
the right-hand sides of the systems at a given number of prior time steps.
This is equivalent to taking the linear combination of the prior solutions that
minimizes the norm of the residual when substituted into the linear system at
the current time step.  To avoid growth in the storage required as the
simulation runs, Fischer periodically restarts the method by discarding the
entire history space of stored solutions and right-hand side vectors.  In
\cite{Chr2017}, Christensen showed how to avoid this ``hard restart'' by using
a classic technique for updating QR factorizations.  This technique was also
employed by al Sayed Ali and Sadkane in \cite{AS2012}, who compared variants of
this projection scheme with initial guess generation methods based on explicit
ODE integrators.  Fischer's principle application of the projection technique
was to incompressible flow, but similar (and in some cases identical) ideas
have emerged in other contexts, including nuclear physics \cite{BILO1997} and
computational electromagnetics \cite{CWSW2004,CWW2003,SCW2006,TZ2002}.

Another method for using prior solutions to generate initial guesses is
polynomial extrapolation, i.e., fitting a polynomial curve to the solutions at
the previous time steps and forming the initial guess by evaluating the
polynomial at the new time step.  In the most basic version of this method, the
curve is taken to be a simple (Lagrange) interpolant, though approximations
based on Taylor expansions \cite{CWW2003} have also been proposed.  Using the
solution at the previous time step is itself a technique of this
type---zeroth-order or constant interpolation---and there are many examples in
the literature of codes and algorithms that employ first-order (linear) and
second-order (quadratic) interpolation to compute initial guesses based on the
previous two or three time steps, respectively.  Extrapolation based on
higher-order polynomial interpolation has also been considered
\cite{BILO1997,GK2011,PH2018,Sht2015}, but the results are mixed in general,
with authors typically pinning any inefficacy on the poor behavior of
polynomial interpolation in \equispaced{} points.  Because of this, it would
seem that polynomial extrapolation is inherently limited compared to
projection: while the latter can potentially profitably use data from as many
previous time steps as one is willing to store, the former is limited to using
just a handful of time steps.  Moreover, right-hand side projection is
\emph{guaranteed} to provide a reduction in the residual norm with each
additional stored datum.  As Fischer points out in \cite{Fis1998}, there is no
such guarantee for extrapolation.

This article has two primary purposes.  The first is to revisit these initial
guess techniques---projection and extrapolation---in the context of PDE solvers
running on modern high-performance computing (HPC) architectures that employ
graphics processing units (GPUs) as accelerators.  GPUs can be challenging to
program effectively; naive implementations of algorithms on GPUs are rarely the
most efficient.  Nevertheless, as GPU-based architectures become increasingly
common%
\footnote{At the time of this writing, the two most powerful HPC systems in the
United States---\emph{Summit} at Oak Ridge National Laboratory and
\emph{Sierra} at Lawrence Livermore National Laboratory---derive the majority
of their computing power from GPUs.  Moreover, the \emph{Aurora},
\emph{Frontier}, and \emph{El Capitan} supercomputers to be delivered in
2021-2022 as part of the U.S. Department of Energy's Exascale Computing Project
will all have GPU-based architectures.}
the demand for implementations of algorithms that use the GPU hardware
efficiently is rising.

The second purpose is to show that the outcome of the competition between
projection and extrapolation for producing superior initial guesses is not as
clear cut as the arguments made above might suggest.  First, we propose a
\emph{stabilized polynomial extrapolation} method based on solving a
least-squares problem instead of naive Lagrange interpolation
\cite{DT2018,Rak2007}, an idea that seems to be new within the context of flow
solvers, though it has appeared before in the computational chemistry
literature \cite{HH2005,PF2004}.  We further propose a \emph{sparse polynomial
extrapolation} method, which computes a Lagrange interpolant through a subset
of the retained history data corresponding to time points with a more favorable
distribution for polynomial interpolation, analogous to the ``mock-Chebyshev''
subsampling idea proposed in \cite{BX2009}.  We will see that while projection
does tend to provide greater reductions in Krylov solver iteration counts
compared with either of these extrapolation methods, it does so at a
comparatively high cost:  the process of extrapolation is considerably cheaper,
amounting to taking a single, fixed linear combination of the history data,
whereas projection additionally requires orthogonalization among other things.
Extrapolation moves considerably less data through the GPU (or CPU), performs
fewer floating-point operations, and, unlike projection, does not require any
global reductions (e.g., inner products), so it may be carried out in parallel
without the need for inter-node communication.  Extrapolation also requires
approximately half as much storage as projection and is simple to implement,
even on a GPU.  Our experiments show that these advantages make extrapolation
generally competitive with projection, if not slightly superior to it in some
circumstances.

We proceed as follows.  In Section \ref{SEC:FSRReview}, we review the method of
Fischer and the improvement thereto due to Christensen.  In Section
\ref{SEC:Extrapolation}, we introduce our new initial guess schemes based on
stabilized and sparse polynomial extrapolation.  As our principle application
of interest is computational fluid dynamics, we then in Section
\ref{SEC:INSSolver} describe a GPU-accelerated solver for the incompressible
Navier--Stokes (INS) equations that we use as a test-bed.  In Section
\ref{SEC:Numerics}, we discuss our GPU implementation of these initial guess
methods and illustrate their performance within our INS solver.  Additionally,
we make observations about how the effectiveness of these methods changes with
the Reynolds number of the flow being simulated.  Both our INS solver and our
implementation of the initial guess schemes are distributed with the
\texttt{libParanumal} collection of GPU-accelerated flow solvers
\cite{ChalmersKarakusAustinSwirydowiczWarburton2020}.

\section{Fischer's Successive Right-Hand-Side Projection Method}
\label{SEC:FSRReview}

In purely linear algebraic terms, Fischer's method for generating initial
guesses via successive right-hand-side projection works as follows.  Let $A$ be
an invertible square matrix, and suppose that we have found the solutions $x_1,
\ldots, x_n$ to a sequence of linear systems governed by $A$ with
corresponding right-hand sides $b_1, \ldots, b_n$ and that we want to compute
the solution $x_{n + 1}$ to the system with right-hand side $b_{n + 1}$.  That
is, we have $Ax_1 = b_1, Ax_2 = b_2, \ldots Ax_n = b_n$, and we want to solve
$Ax_{n + 1} = b_{n + 1}$.

The key idea is to consider the orthogonal projection $\hat{b}_{n + 1}$ of
$b_{n + 1}$ onto the span of $b_1, \ldots, b_n$.  Writing $\hat{b}_{n + 1}$ as
a linear combination of $b_1, \ldots b_n$,
\[
\hat{b}_{n + 1} = \alpha_1 b_1 + \cdots + \alpha_n b_n,
\]
we have $A\hat{x}_{n + 1} = \hat{b}_{n + 1}$, where $\hat{x}_{n + 1}$ is the
same linear combination of $x_1, \ldots, x_n$:
\[
\hat{x}_{n + 1} = \alpha_1 x_1 + \cdots + \alpha_n x_n.
\]
This choice of $\hat{x}_{n + 1}$ is optimal in the sense that it minimizes the
residual norm $\| b_{n + 1} - Ax \|_2$ among all linear combinations $x$ of
$x_1, \ldots, x_n$.

If $\hat{b}_{n + 1} \approx b_{n + 1}$---that is, if $b_{n + 1}$ almost lies in
the span of $b_1, \ldots, b_n$---then we will have $x_{n + 1} \approx
\hat{x}_{n + 1}$, provided that $A$ is not badly conditioned.  Thus,
$\hat{x}_{n + 1}$ ought to make a good initial guess for a Krylov subspace
method applied to $Ax_{n + 1} = b_{n + 1}$.  In particular, this is likely to
happen when $b_1, \ldots, b_{n + 1}$ are the right-hand sides for linear
systems at a group of successive time steps in a PDE simulation, as they are
frequently constructed using similar formulae applied to similar inputs and
thus tend to resemble one another fairly closely.

This procedure is simple to implement.  All one needs is a means for computing
the orthogonal projection $\hat{b}_{n + 1}$, and this can be accomplished
straightforwardly using the Gram--Schmidt process to compute an orthonormal
basis for $\linspan\{b_1, \ldots, b_n\}$.  Expanding the basis to retain
information from the linear systems solved at each new time step is easy:  just
do one additional step of Gram--Schmidt to orthogonalize the new right-hand
side against the existing basis.  Some form of reorthogonalization is necessary
to guard against loss of orthogonality in finite-precision arithmetic.  The
standard strategy is to simply do each orthogonalization twice, which is
(provably) usually sufficient \cite[\S6.9]{Par1998}.

There are two issues that must be addressed.  First, when expanding the history
space of retained right-hand sides, one must guard against adding vectors to
the basis that are too similar to the ones already there (in the sense of near
linear dependence).  We have observed that failure to do this diminishes the
quality of later computed initial guesses and can even result in the simulation
becoming unstable.  Fortunately, this is easy enough to detect:  if the norm of
the residual after projection, $\|b_{n + 1} - \hat{b}_{n + 1}\|$, is very small
relative to $b_{n + 1}$, then $b_{n + 1}$ already nearly belongs to
$\linspan\{b_1, \ldots, b_n\}$, so adding it to the history space is pointless.
The threshold for making this determination will depend on the parameters of
the PDE solver under consideration; at a minimum, it will depend on the
stopping criterion for the linear solver.

Second, in the absence of unlimited storage, it is not possible to continue
expanding the space of retained right-hand sides indefinitely:  one needs
storage for two vectors---one for the right-hand side and one for the
corresponding solution---for each time step whose history is to be kept in the
space.  In \cite{Fis1998}, Fischer addressed this problem by periodically
restarting the technique---discarding the space built to that point and
beginning anew---after taking a given fixed number of time steps.  We refer to
the resulting initial guess generation algorithm as the ``classic'' variant of
Fischer's technique, and pseudocode for it is presented in Algorithm
\ref{ALG:FSRClassic}.

\begin{algorithm2e}[t]
\SetNoFillComment \DontPrintSemicolon \setcounter{AlgoLine}{0}
\caption{``Classic'' right-hand side projection technique}\label{ALG:FSRClassic}
\SetKwInOut{Input}{Input}
\SetKwInOut{Output}{Output}
\Input{$b$, $\tilde{B}$, $\tilde{X}$, $d$, $M$, $x_0$ as described in the text}
\Output{Solution to $Ax = b$, updated history spaces and dimensions}
\lIf{$\mathrm{d} > 0$} {$x_0 = \tilde{X}\tilde{B}_{:, 1:d}^Tb$ }\label{ALL:fsrBasisInnerProducts1}
Solve $Ax = b$ with initial guess $x_0$ to produce approximate solution $x$.\;
$\tilde{x} \gets x$, $\tilde{b} \gets A\tilde{x}$\label{ALL:ApplyOp}\;
\If{$d = 0$ or $d \geq M$}{
  $\tilde{X}_{:, 1} \gets \tilde{x}/\|\tilde{b}\|$, $\tilde{B}_{:, 1} \gets \tilde{b}/\|\tilde{b}\|$\;
  $d \gets 1$\;
} \Else {
  \For(\tcp*[h]{Twice-iterated Gram-Schmidt}){$n \gets 1$ \KwTo $2$}{
    $c \gets \tilde{B}_{:, 1:d}^T\tilde{b}$\label{ALL:fsrBasisInnerProducts2}\;
    $\tilde{b} \gets \tilde{b} - \tilde{B}_{:, 1:d}c$, $\tilde{x} \gets \tilde{x} - \tilde{X}_{:, 1:d}c$\label{ALL:fsrReconstruct}\;
  }
  \If{$\|\tilde{b}\| > \varepsilon$}{
    $\tilde{X}_{:, d + 1} \gets \tilde{x}/\|\tilde{b}\|$, $\tilde{B}_{:, d + 1} \gets \tilde{b}/\|\tilde{b}\|$\label{ALL:fsrUpdate}\;
    $d \gets d + 1$\;
  }
}
\end{algorithm2e}

This wholesale discarding of the history space seems wasteful, and it is
natural to wonder if there might be a better way.  In \cite{Chr2017},
Christensen proposed dropping a single time step's worth of information from
the history space at a time using a standard linear algebra technique for
updating Gram--Schmidt QR factorizations \cite{DGKS1976},
\cite[\S6.5.2]{GV2013_4e}.  Once the history space reaches a preselected
maximum dimension, Christensen applies the update technique to eliminate the
right-hand side corresponding to the oldest retained time step,.  Once the
linear system is solved, the new right-hand side from the current time step can
then be added to the history space in the usual way.  Thus, the history space
forms a ``rolling window'' of information from the previous few time steps, and
for this reason, we refer to this method as the ``rolling QR'' variant of this
algorithm.  Pseudocode may be found in Algorithm \ref{ALG:FSRQR}.

\begin{algorithm2e}
\SetNoFillComment \DontPrintSemicolon \setcounter{AlgoLine}{0}
\caption{``Rolling QR'' right-hand side projection technique}\label{ALG:FSRQR}
\SetKwInOut{Input}{Input}
\SetKwInOut{Output}{Output}
\Input{$b$, $\tilde{B}$, $\tilde{X}$, $d$, $M$, $x_0$, $R$ as described in the text}
\Output{Solution to $Ax = b$, updated history spaces and dimensions}
\lIf{$\mathrm{d} > 0$} {$x_0 = \tilde{X}\tilde{B}_{:, 1:d}^Tb$ }
Solve $Ax = b$ with initial guess $x_0$ to produce approximate solution $x$.\;
$\tilde{x} \gets x$, $\tilde{b} \gets A\tilde{x}$\;
\If{$d = M$}{
\tcp{QR update}
$R \gets R_{:, 2:d}$\label{ALL:fsrDropQRFirstColumn1}\;
\For{$i \gets 1$ \KwTo $d - 1$} {
  $a \gets R_{i, i}$, $b \gets R_{i, i + 1}$\;
  $c \gets |a|/\sqrt{a^2 + b^2}$\;
  $s \gets \sign(a)/\sqrt{a^2 + b^2}$\;
  $G \gets \begin{bmatrix}c & s \\ -s & c\end{bmatrix}$ \tcp*{Givens rotation to zero $R_{i, i + 1}$}
  $R_{i:i + 1, :} \gets GR_{i:i + 1, :}$ \tcp*{Update $R$}
  $\tilde{B}_{:, i:(i + 1)} \gets \tilde{B}_{:, i:(i + 1)}G^T$ \tcp*{Update right-hand side history}
  $\tilde{X}_{:, i:(i + 1)} \gets \tilde{X}_{:, i:(i + 1)}G^T$ \label{ALL:fsrDropQRFirstColumn2}\tcp*{Update solution history}
}
$d \gets d - 1$\;
}
\If{$d = 0$}{
  $R_{1, 1} \gets \|\tilde{b}\|$\;
  $\tilde{X}_1 \gets \tilde{x}/R_{1, 1}$, $\tilde{B}_1 \gets \tilde{b}/R_{1, 1}$\;
  $d \gets 1$\;
} \Else {
  $R_{1:d, d + 1} \gets 0$\;
  \For(\tcp*[h]{Twice-iterated Gram-Schmidt}){$n \gets 1$ \KwTo $2$}{
    $c \gets \tilde{B}_{:, 1:d}^T\tilde{b}$\;
    $\tilde{b} \gets \tilde{b} - \tilde{B}_{:, 1:d}c$, $\tilde{x} \gets \tilde{x} - \tilde{X}_{:, 1:d}c$\;
    $R_{1:d, d + 1} \gets R_{1:d, d + 1} + c$\;
  }
  \If{$\|\tilde{b}\| > \varepsilon$}{
    $R_{d + 1, d + 1} \gets \|\tilde{b}\|$\;
    $\tilde{X}_{:, d + 1} \gets \tilde{x}/R_{d + 1, d + 1}$, $\tilde{B}_{:, d + 1} \gets \tilde{b}/R_{d + 1, d + 1}$\;
    $d \gets d + 1$\;
  }
}
\end{algorithm2e}

The listings in Algorithms \ref{ALG:FSRClassic} and \ref{ALG:FSRQR} employ the
following notation.  The vector $b$ is the new right-hand side for which a
solution is sought.  The columns of the matrices $\tilde{B}$ and $\tilde{X}$
contain the history space of previous right-hand sides and corresponding
solutions, respectively, so that $A\tilde{X} = \tilde{B}$.  The columns of
$\tilde{B}$ are orthonormal.  The variable $d$ is the current dimension of the
history spaces, and $M$ is the maximum allowable dimension.  The number
$\varepsilon$ is a tolerance for determining whether $b$ is too nearly linearly
dependent on the columns of $\tilde{B}$ to warrant adding it to the space once
the system for it is solved.  The vector $x_0$ is an arbitrary input for the
initial guess when $d = 0$.  In Algorithm \ref{ALG:FSRQR}, $R$ is the
triangular factor in the $QR$ decomposition of the right-hand side history
space (in which $\tilde{B}$ plays the role of $Q$).  Matrices are indexed using
a MATLAB-like colon notation, e.g., $\tilde{B}_{:, i:(i + 1)}$ refers to
columns $i$ and $i + 1$ of $\tilde{B}$.  For any matrix $L$, the symbol $L^T$
denotes the transpose of $L$.

\section{Stabilized Polynomial Extrapolation Methods}
\label{SEC:Extrapolation}

Compared with projection, extrapolation is both an older and more widely-used
method for computing initial guesses.  Like projection, it forms the guess as a
linear combination of prior solutions, but it chooses the coefficients of the
combination differently.

In our discussion, we employ the following notation, which differs slightly
from that of the previous section:  given solutions $x_{n - M + 1}, \ldots,
x_n$ to systems at $M$ previous time steps, we form an estimate $\hat{x}_{n +
1}$ for the solution $x_{n + 1}$ at the current time step.  We denote the
coefficients of the linear combination that forms $\hat{x}_{n + 1}$ by
$\beta_i$, writing
\begin{equation}\label{EQN:ExtrapExpn}
\hat{x}_{n + 1} = \sum_{i = 1}^M \beta_i x_{n - M + i},
\end{equation}
i.e., $\beta_1$ multiplies the oldest stored value, $\beta_2$ the next oldest,
and so forth.  To keep the discussion concrete, we focus primarily on the case
of a fixed time step, but our remarks apply to variable time step schemes as
well with only minor modifications.

\subsection{Extrapolation via Lagrange Interpolation}
\label{SSEC:LagrangeExtrap}

The most common extrapolation technique by far is simple Lagrange polynomial
interpolation:  compute a polynomial of degree $M - 1$ that interpolates $x_{n
- M + 1}, \ldots, x_n$ and determine $\hat{x}_{n + 1}$ by evaluating that
polynomial at the current time step.  Using the solution at the previous time
step is one example of this strategy with a degree-0 (constant) interpolating
polynomial; this corresponds to the case where $M = 1$ and $\beta_1 = 1$.  As
is widely known, if the size of the time step is constant, the coefficients
$\beta_i$ have a simple closed-form expression:

\begin{theorem}
For extrapolation of data given at $M$ \equispaced{} points via Lagrange
interpolation by a polynomial of degree $M - 1$,
\begin{equation}\label{EQN:NaiveExtrapCoeffs}
\beta_i = (-1)^{M - i} \binom{M}{i - 1}.
\end{equation}
\end{theorem}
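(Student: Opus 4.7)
The plan is to obtain the $\beta_i$ by direct evaluation of the Lagrange interpolation formula. Since the coefficients of a Lagrange extrapolant depend only on the affine geometry of the nodes, I can without loss of generality place the $M$ equi-spaced data points at $t_1 = 1,\ldots,t_M = M$ and take the extrapolation target to be $t = M+1$. Then $\beta_i$ is precisely the value at $t = M+1$ of the $i$-th Lagrange basis polynomial
\[
\ell_i(t) \;=\; \prod_{\substack{j = 1 \\ j \ne i}}^{M} \frac{t - j}{i - j}.
\]

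First I would simplify the numerator: $\prod_{j \ne i}(M+1-j)$ is the full product $\prod_{j=1}^{M}(M+1-j) = M!$ with the single factor $(M+1-i)$ divided out, giving $M!/(M+1-i)$. Next I would handle the denominator by splitting $\prod_{j \ne i}(i-j)$ into the indices $j < i$, which contribute $(i-1)!$, and the indices $j > i$, which contribute $(-1)^{M-i}(M-i)!$ after pulling out the $M-i$ sign flips. Combining, $\beta_i = (-1)^{M-i} M!/[(i-1)!(M+1-i)(M-i)!]$. Recognizing $(M+1-i)(M-i)! = (M+1-i)!$ collapses this to $(-1)^{M-i} M!/[(i-1)!(M+1-i)!] = (-1)^{M-i}\binom{M}{i-1}$, which is the stated identity.

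No step is delicate; the only bookkeeping hazard is getting the sign right when converting the product of negative factors in the denominator into a single $(-1)^{M-i}$. I would do a quick sanity check on small cases: $M=1$ gives $\beta_1 = 1$ (constant extrapolation), $M=2$ gives $\beta_1=-1,\beta_2=2$ (linear), and $M=3$ gives $\beta_1=1,\beta_2=-3,\beta_3=3$ (quadratic), all matching the standard Adams-style extrapolation weights.

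As an alternative route that avoids the product manipulation entirely, one can invoke the fact that the $M$-th forward difference $\Delta^M p$ of any polynomial of degree at most $M-1$ vanishes identically. Writing out $\Delta^M p(1) = \sum_{k=0}^{M}(-1)^{M-k}\binom{M}{k} p(1+k) = 0$ and solving for $p(M+1)$ produces $p(M+1) = \sum_{i=1}^{M}(-1)^{M-i}\binom{M}{i-1} p(i)$ after the substitution $i = k+1$. Since the degree-$(M-1)$ interpolant satisfies $p(i) = x_{n-M+i}$, reading off the coefficients recovers \eqref{EQN:NaiveExtrapCoeffs} immediately.
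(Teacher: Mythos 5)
Your proposal is correct, and your primary argument takes a genuinely different route from the one the paper indicates. The paper omits its proof entirely, stating only that the result is ``a straightforward application of Newton's forward difference formula and standard identities involving binomial coefficients'' --- which is precisely your \emph{alternative} argument: from $\Delta^M p \equiv 0$ for $\deg p \le M-1$, expand $\Delta^M p(1) = \sum_{k=0}^{M}(-1)^{M-k}\binom{M}{k}p(1+k) = 0$, isolate $p(M+1)$, and reindex. Your main argument instead evaluates the Lagrange cardinal functions $\ell_i$ directly at the extrapolation node $t = M+1$, splitting the denominator product at $j = i$ to extract the sign $(-1)^{M-i}$ and collapsing $(M+1-i)(M-i)!$ into $(M+1-i)!$; I checked the algebra and the sign bookkeeping, and both are right (your small-case checks $M=1,2,3$ agree with the paper's quoted example $\beta_1=-1$, $\beta_2=2$ for $M=2$). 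The direct route is self-contained and requires no external identity, at the cost of a few lines of product manipulation; the forward-difference route is shorter once the formula $\Delta^M p = 0$ is granted and makes the connection to the $(M+1)^{\mathrm{th}}$ row of Pascal's triangle (which the paper remarks on) completely transparent. Either would serve as a full proof where the paper gives only a pointer.
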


The proof, which we omit, is a straightforward application of Newton's forward
difference formula \cite[25.2.28]{AS1968}, \cite{Joy1971} and standard
identities involving binomial coefficients.  As an example, extrapolation via a
linear fit with a fixed time step has $M = 2$, $\beta_1 = -1$, and $\beta_2 =
2$.  These coefficients are tabulated in several places in the literature, e.g.
\cite[Table 1]{Sht2015}, and, ignoring sign, can be found on the $(M +
1)^\textrm{th}$ row of Pascal's triangle.

This scheme is simple but runs into trouble in practice as $M$ increases.  This
is typically blamed on the fact that the polynomial interpolation problem can
be poorly conditioned if the interpolation points are not chosen appropriately.
Equi-spaced interpolation in particular (which corresponds in our context to
Lagrange interpolation of solution history with a fixed time step) is known to
become exponentially poorly conditioned as the number of points increases and
moreover can fail to converge even in theory for well-behaved functions, a
property known as the Runge phenomenon \cite{Tre2013}.  The resulting
algorithmic instability limits the utility of extrapolation via Lagrange
interpolation to small values of $M$ and low polynomial orders.  This would
seem to be a severe disadvantage for extrapolation compared to projection
methods, which can use as much history data as one is willing to store to
further improve the initial guess.

\subsection{Stable Polynomial Extrapolation via Least-Squares}

We posit that this limitation can be overcome by switching from interpolation
to least-squares fitting.  While polynomial interpolation at \equispaced{}
points deserves its poor reputation, recent work \cite{DT2018,Rak2007} has
shown that polynomial least-squares fitting can be performed stably provided
that the number of data points $M$ increases at a rate proportional to the
\emph{square} of the degree $m$ of the polynomial being fit:  $M = O(m^2)$ as
$m$ becomes large.  While this technique has been known to approximation
theorists for some time, it seems to have been largely overlooked for use in
generating initial guesses within PDE solvers, despite some precedence in
\cite{HH2005,PF2004}.

The price paid for this stability is a reduction in the order of accuracy:
even when the data to be fit consists of samples of values of a holomorphic
function, the rate of convergence of the scheme is only root-exponential in $M$
as $M$ increases, while a well-constructed interpolation scheme using good
points would converge geometrically.  This trade-off is inevitable, as it is
impossible to construct a stable algorithm for approximating data from
\equispaced{} samples that converges at a geometric rate
\cite{platte2011impossibility}.  Nevertheless, as our experiments in Section
\ref{SEC:Numerics} show, the initial guesses generated by this \emph{stabilized
polynomial extrapolation} scheme can be quite effective at reducing solver
iteration counts and the time to solution.

Given a polynomial degree $m$ and a history space of size $M \geq m + 1$, we
can compute the coefficients $\beta_i$ as follows.  First, we lose no
generality in assuming that $x_{n - M + 1}, \ldots x_n$ are scalar values
rather than vectors, as vector data is handled merely by extrapolating each
component.  Since we assume a fixed time step, we also lose no generality in
assuming that the time points at which the history data are provided are the
equally spaced points $t_i = -1 + (i - 1)h$ in $[-1, 1]$, where $h = 2/(M - 1)$
and $1 \leq i \leq M$.  For if not, we can make them so by applying an affine
transformation, which does not change the solution to the least-squares
problem.  Under this assumption, the new time point---the point at which the
extrapolating polynomial must be evaluated---is $t_{n + 1} = 1 + h$.

Let $\psi_0, \ldots \psi_m$ be any basis for the space of polynomials of degree
at most $m$ that is well-behaved on $[-1, 1]$, such as Legendre or Chebyshev
polynomials, and consider the Vandermonde-like matrix $V$ whose $(i, j)$ entry
is $V_{ij} = \psi_j(t_i)$ for $1 \leq i \leq M$ and $0 \leq j \leq m$.
Expressing the extrapolation polynomial in this basis as $\psi(t) = c_0
\psi_0(t) + \cdots + c_m\psi_m(t)$, the overdetermined linear system for the
expansion coefficients $c_0, \ldots c_m$ is
\begin{equation}\label{EQN:LSQRSystem}
Vc = x,
\end{equation}
where
\[
c = \begin{bmatrix} c_0 \\ \vdots \\ c_m \end{bmatrix} \qquad\text{and}\qquad x = \begin{bmatrix} x_{n - M + 1} \\ \vdots \\ x_n \end{bmatrix}.
\]
Noting that $V$ is full-rank, the unique least-squares solution to this system
is
\[
c = (V^TV)^{-1}V^Tx.
\]
With $c$ in hand, we evaluate
\[
\hat{x}_{n + 1} = \psi(t_{n + 1}) = v^T c,
\]
where $v$ is a column vector of length $m + 1$ with $j^\textrm{th}$ component
$v_j = \psi_j(t_{n + 1})$, $0 \leq j \leq m$.  Thus,
\[
\hat{x}_{n + 1} = v^T (V^TV)^{-1}V^Tx,
\]
from which it follows that
\begin{equation}\label{EQN:LSQRCoeffs}
\beta^T = v^T(V^TV)^{-1}V^T,
\end{equation}
where $\beta^T = \begin{bmatrix} \beta_1 & \cdots & \beta_M\end{bmatrix}$.

Note that once we have $\beta$ in hand, we no longer need $c$; the expansion
\eqref{EQN:ExtrapExpn} does not use the least-squares polynomial directly.  In
particular, we do not need to carry out the fitting procedure just described
for each component of the vectors.

\subsection{Sparse Polynomial Extrapolation}
\label{SSEC:SparseExtrap}

There is another way to characterize the coefficients $\beta_1, \ldots,
\beta_M$ just computed that turns out to be insightful.  Observe that
\eqref{EQN:LSQRCoeffs} implies
\begin{equation}\label{EQN:PolyExactness}
\beta^T V = v^T.
\end{equation}
To interpret this equation, recall that the $j^\textrm{th}$ column of $V$
consists of the values of $\psi_j$ at the prior time points, so the inner
product of $\beta$ with this column gives the value obtained by the
extrapolation scheme when the history data consist of samples of $\psi_j$.
Since the $j^\textrm{th}$ element of $v$ is the value of $\psi_j$ at the new
time point, it follows that the extrapolation scheme is \emph{exact} in this
case.  It then follows by linearity that the scheme is exact if the history
data consist of samples of any polynomial of degree at most $m$.  Of course we
do not need \eqref{EQN:PolyExactness} to tell us this; it follows from the fact
that $c$ was chosen to solve \eqref{EQN:LSQRSystem} in a least-squares sense.
If $x$ belongs to $\Ran(V)$, then $c$ can be chosen so that the residual
vanishes.

But what if, instead of \eqref{EQN:LSQRSystem}, we take
\eqref{EQN:PolyExactness} as our starting point and demand \emph{only} that our
extrapolation scheme yield exact results for degree-$m$ polynomial data?  The
equation \eqref{EQN:PolyExactness} defines an underdetermined system for the
unknown row vector $\beta^T$ and thus has infinitely many solutions.  The
choice of $\beta^T$ given by \eqref{EQN:LSQRCoeffs} has the distinction of
being the solution to \eqref{EQN:PolyExactness} that has minimum norm, a
consequence of the fact that the matrix $(V^TV)^{-1}V^T$ is the
(Moore--Penrose) pseudoinverse of $V$.  In essence, this minimality property
says that the extrapolation scheme derived from \eqref{EQN:LSQRCoeffs} is the
``most stable'' of all that can be derived from \eqref{EQN:PolyExactness}, and
in general, this is a good reason to prefer it.  Nevertheless, it is natural to
ask if there are other solutions to \eqref{EQN:PolyExactness} that yield
extrapolation schemes of interest.

As a potential alternative to the pseudoinverse solution, we propose to use
extrapolation schemes derived from solutions to \eqref{EQN:PolyExactness}
obtained via a column-pivoted QR factorization \cite[\S5.6.2]{GV2013_4e}.
First, observe that \eqref{EQN:PolyExactness} is equivalent to
\[
V^T \beta = v.
\]
Write $V^TP = QR$, where $Q$ is an $(m + 1) \times (m + 1)$ orthogonal matrix,
$R$ is an $(m + 1) \times M$ upper trapezoidal matrix, and $P$ is an $M \times
M$ permutation matrix chosen by the column pivoting.  Then, since $P^{-1} =
P^T$, we have
\[
QR P^T \beta = v.
\]
Now, partition $R$ and $P^T\beta$ as
\[
R = \begin{bmatrix} \hat{R} & \tilde{R} \end{bmatrix} \qquad\text{and}\qquad P^T \beta = \begin{bmatrix} \hat{\beta} \\ \tilde{\beta}\end{bmatrix},
\]
where $\hat{R}$ is $(m + 1) \times (m + 1)$, $\tilde{R}$ is $(m + 1) \times (M
- m - 1)$ and $\hat{\beta}$ and $\tilde{\beta}$ are length $m + 1$ and $M$,
respectively.  Then, we have
\[
\hat{R}\hat{\beta} + \tilde{R}\tilde{\beta} = Q^Tv.
\]
and we are free to choose $\hat{\beta}$ and $\tilde{\beta}$ to make this
equation hold.  One natural choice is to set $\tilde{\beta} = 0$.  Then, since
$V$ is full-rank, $\hat{R}$ is invertible, and we have $\hat{\beta} =
\hat{R}^{-1}Q^Tv$. It follows that
\begin{equation}\label{EQN:CPQRCoeffs}
\beta = P\begin{bmatrix}\hat{R}^{-1}Q^Tv \\ 0 \end{bmatrix}
\end{equation}
is another choice for $\beta$ that satisfies \eqref{EQN:PolyExactness}.

The advantage of choosing $\beta$ according to \eqref{EQN:CPQRCoeffs} instead
of \eqref{EQN:LSQRCoeffs} is that with the former, only $m + 1$ of the entries
are nonzero, which reduces the amount of computational effort required to
evaluate \eqref{EQN:ExtrapExpn}.  For this reason, we refer to this scheme as
\emph{sparse polynomial extrapolation}.  The sacrifice made is a little bit of
stability:  in general, \eqref{EQN:CPQRCoeffs} is not the minimum-norm solution
to \eqref{EQN:PolyExactness}, so extrapolation with \eqref{EQN:CPQRCoeffs} will
have a greater propensity to ``amplify'' the history data.  Nevertheless, the
column pivoting ensures that this loss of stability is minimal, and we will see
in Section \ref{SSEC:SolverItersSimTime} that both \eqref{EQN:LSQRSystem} and
\eqref{EQN:CPQRCoeffs} lead to successful extrapolation schemes in practice.

Another way to view this scheme is as follows.  Since only $m + 1$ of the
coefficients are nonzero, yet \eqref{EQN:CPQRCoeffs} solves the exactness
equation \eqref{EQN:PolyExactness} for polynomials, it follows that an
extrapolation scheme based on \eqref{EQN:CPQRCoeffs} is actually computing a
degree-$m$ polynomial interpolant through the data points corresponding to the
nonzero coefficients.  What sets this interpolation scheme apart from the basic
one discussed in Section \ref{SSEC:LagrangeExtrap} is which points are chosen.
The basic scheme simply uses the last $m + 1$ data points, while the scheme
based on \eqref{EQN:CPQRCoeffs} adaptively selects $m + 1$ of the previous $M$
data points to yield a scheme with better stability properties.

This idea is not without precedent in the literature.  Bos and coauthors used
column-pivoted QR in \cite{bos2010computing} to select good sets of points for
multivariate interpolation.  In \cite{BX2009}, Boyd and Xu proposed what they
call ``mock-Chebyshev subset interpolation'', in which they select $m + 1$
interpolation points from an \equispaced{} grid of $O(m^2)$ points so that they
are distributed similarly to Chebyshev points, which are a near-optimal set of
points for polynomial interpolation \cite{Tre2013}.  The points selected by the
column-pivoted QR procedure exhibit similar end point clustering to Chebyshev
points.

\section{Comparison of Computational Costs}

It would seem at first glance that the minimality property satisfied by the
projection schemes makes them obviously superior.  In more detail, we have the
following result:

\begin{theorem}
Consider a sequence $Ax_{n - M + 1} = b_{n - M + 1}, \ldots, Ax_n = b_n$ of $M$
linear systems, where $x_i = x(ih)$ and $b_i = b(ih)$ are equally-spaced
samples of smooth functions $x(t)$ and $b(t)$ for some $h > 0$.  Let
$\hat{x}_{n + 1}^P$ denote the approximation to $x_{n + 1} = A^{-1}b_{n + 1}$
obtained using right-hand side projection onto $\linspan \{b_{n - M + 1},
\ldots, b_M\}$, and let $\hat{x}_{n + 1}^E$ denote the approximation to $x_{n +
1}$ obtained by degree-$m$ least-squares extrapolation of $\linspan \{x_{n - M
+ 1}, \ldots, x_M\}$, where $M \geq m + 1$.  The residual norms satisfy:
\begin{enumerate}
\renewcommand{\labelenumi}{(\roman{enumi})}
\item $\|b_{n + 1} - A\hat{x}_{n + 1}^P\|_2 \leq \|b_{n + 1} - A\hat{x}_{n + 1}^E\|_2$.
\item $\|b_{n + 1} - A\hat{x}_{n + 1}^P\|_2 = O(h^M)$ as $h \to 0$.
\item $\|b_{n + 1} - A\hat{x}_{n + 1}^E\|_2 = O(h^{m + 1})$ as $h \to 0$.
\end{enumerate}
\end{theorem}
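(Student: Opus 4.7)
The strategy is to tackle the three parts in order, leveraging the optimality of projection for (i) and (ii) and the polynomial-exactness of the least-squares scheme for (iii).

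Parts (i) and (ii) both rest on the variational characterization emphasized in Section~\ref{SEC:FSRReview}: $\hat{x}_{n+1}^P$ minimizes $\|b_{n+1} - Ax\|_2$ over all $x \in \linspan\{x_{n-M+1}, \ldots, x_n\}$. For (i), the inequality follows immediately since \eqref{EQN:ExtrapExpn} shows $\hat{x}_{n+1}^E$ lies in this same span. For (ii), I would exhibit an explicit $O(h^M)$ competitor: Lagrange extrapolation of degree $M-1$ applied to the smooth vector-valued function $b(t)$ at the equispaced nodes $t_{n-M+1}, \ldots, t_n$ produces coefficients (the signed binomials of \eqref{EQN:NaiveExtrapCoeffs}, independent of $h$) whose associated combination $\sum_i c_i b_{n-M+i}$ differs from $b_{n+1}$ by $O(h^M)$, via the standard Newton forward-difference remainder involving $b^{(M)}$. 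The projection residual is bounded above by this quantity.

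Part (iii) exploits the exactness relation \eqref{EQN:PolyExactness}. Since $A\hat{x}_{n+1}^E = \sum_i \beta_i b_{n-M+i}$, the residual equals $b(t_{n+1}) - \sum_i \beta_i b(t_{n-M+i})$, i.e., the same extrapolation applied to $b$ in place of $x$. Let $p$ be the degree-$m$ Taylor polynomial of $b$ centered at $t_n$. Exactness gives $\sum_i \beta_i p(t_{n-M+i}) = p(t_{n+1})$, so the residual equals $(b-p)(t_{n+1}) - \sum_i \beta_i (b-p)(t_{n-M+i})$. Each argument lies within $O(h)$ of $t_n$, so each pointwise remainder is $O(h^{m+1})$, and the claim follows as long as $\sum_i |\beta_i|$ is bounded independent of $h$.

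The only real subtlety I anticipate is precisely this last point: the $h$-independence of $\sum_i|\beta_i|$ and the constants hidden in the $O(\cdot)$ bounds. A clean resolution is to observe that an affine rescaling of the time axis leaves the extrapolation formula invariant, since only values of $x$ or $b$ enter and not the time variable itself. One may therefore derive the $\beta_i$ on the fixed reference interval $[-1,1]$ used in Section~\ref{SEC:Extrapolation}, where they are manifestly functions of $m$ and $M$ alone; each $h$-factor then appears through the chain rule as a Taylor remainder is pulled back to the physical interval. This is bookkeeping rather than a genuine obstacle.
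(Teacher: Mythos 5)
Your proposal is correct, and for parts (i) and (ii) it follows the paper's own route essentially verbatim: the extrapolant is a competitor in the projection's residual minimization, and an explicit degree-$(M-1)$ Lagrange extrapolant of $b(t)$ supplies the $O(h^M)$ competitor for (ii). For part (iii), however, you take a genuinely different path. The paper compares $\hat{x}_{n+1}^E$ to the degree-$m$ Lagrange interpolant $\hat{x}_{n+1}^I$ through the last $m+1$ data points, asserts $\|x_{n+1} - \hat{x}_{n+1}^E\|_2 \leq \|x_{n+1} - \hat{x}_{n+1}^I\|_2$ because the interpolant is ``a competitor in the least-squares problem,'' and then pushes the bound through $\|A\|_2$. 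You instead observe that $A\hat{x}_{n+1}^E = \sum_i \beta_i b_{n-M+i}$, so the residual is the extrapolation error of the \emph{same} scheme applied to $b$, and then run the standard exactness-plus-stability argument: subtract the degree-$m$ Taylor polynomial of $b$ (annihilated by \eqref{EQN:PolyExactness}), bound each remainder by $O(h^{m+1})$, and control the sum by the Lebesgue constant $\|\beta\|_1$, which is $h$-independent by the affine invariance you note. Your route is arguably the more careful one: the paper's inequality quietly identifies ``smaller least-squares fitting residual on the history data'' with ``smaller error at the extrapolation point $t_{n+1}$,'' which does not follow from the variational characterization alone, whereas your argument needs only polynomial exactness and boundedness of $\|\beta\|_1$---precisely the Lebesgue-constant quantity the paper itself introduces later in Section \ref{SSEC:NaiveExtrapExperiments}. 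The price is the extra bookkeeping you flag, but as you say, it is bookkeeping and not an obstacle.
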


\begin{proof}

Statement (i) follows from the fact that $\hat{x}_{n + 1}^P$ is chosen to
minimize $\|b_{n + 1} - Ax\|_2$ over all possible linear combinations $x$ of
$x_{n - M + 1}, \ldots x_n$, and $\hat{x}_{n + 1}^E$ is one such combination.
Statement (ii), proved in \cite[Theorem 2.3]{AS2012}, is a consequence of the
fact that another such combination is given by degree-$(M - 1)$ polynomial
interpolation, and this can be shown to be $O(h^M)$-accurate using Taylor's
theorem.

Statement (iii) can be proved by similar arguments.  If $\hat{x}_{n + 1}^I$ is
the initial guess obtained by degree-$m$ polynomial interpolation in $x_{n -
m}, \ldots, x_n$, then
\[
\|x_{n + 1} - \hat{x}_{n + 1}^E\|_2 \leq \|x_{n + 1} - \hat{x}_{n + 1}^I\|_2,
\]
since $\hat{x}_{n + 1}^I$ is a competitor in the least-squares problem
$\hat{x}_{n + 1}^E$ was chosen to solve.  Thus,
\[
\|b_{n + 1} - A\hat{x}_{n + 1}^E\|_2 \leq \|A\|_2 \|x_{n + 1} - \hat{x}_{n + 1}^E\|_2 \leq \|A\|_2 \|x_{n + 1} - \hat{x}_{n + 1}^I\|_2 = O(h^{m + 1})
\]
by the result on the accuracy of polynomial interpolation just mentioned.
\end{proof}

We will see in Section \ref{SEC:Numerics} that the advantage of projection
predicted by this theorem is real:  initial guesses derived from projection do
tend to yield smaller residuals and correspondingly lower solver iteration
counts.  But this advantage comes at a price.  Projection is more expensive to
carry out than extrapolation, and if the difference in cost is large enough,
the latter may be able to compete when effectiveness is measured in time to
solution.

The core operations involved in both projection and extrapolation are easily
seen to be memory-bound, and we can therefore compare the cost of the two by
counting the number of memory operations that our implementations of these
methods perform at each time step.  Table \ref{TAB:DataMoveCounts} shows these
counts to leading order for each of the methods we have been considering.  In
the first column, we introduce some nomenclature for the methods that we will
use throughout the remainder of the article.  ``LAST'' refers to the initial
guess strategy of using the solution at the previous time step.
``CLASSIC($M$)'' and ``QR($M$)'' refer to the classic and rolling-QR variants
of the projection methods with a size-$M$ history space.  ``EXTRAP($m$,~$M$)''
refers to least-squares polynomial extrapolation with degree $m$ over a history
space of size $M$, and ``SPEXTRAP($m$,~$M$)'' refers to the sparse variant.
The symbol $\Ntotal$ is the dimension of the linear systems.  By ``leading
order'', we mean that we have assumed $\Ntotal \gg M$. and therefore can
neglect terms in the counts that do not contain $\Ntotal$ as a factor.

\begin{table}
\begin{center}
\makebox[\textwidth][c]{
\begin{tabular}{c c c c}
\toprule
Method & Form guess & Update history space & Total \\
\midrule
LAST               & $0$               & $0$                 & $0$                 \\
CLASSIC($M$)       & $(M + 3)\Ntotal$  & $(3M + 27)\Ntotal$  & $(4M + 30)\Ntotal$  \\
QR($M$)            & $2(M + 1)\Ntotal$ & $(10M + 24)\Ntotal$ & $(12M + 26)\Ntotal$ \\
EXTRAP($m$, $M$)   & $(M + 1)\Ntotal$  & $0$                 & $(M + 1)\Ntotal$    \\
SPEXTRAP($m$, $M$) & $(m + 2)\Ntotal$  & $0$                 & $(m + 2)\Ntotal$    \\
\bottomrule
\end{tabular}
}
\end{center}
\caption{Leading-order counts of memory operations (number of floating-point
values loaded and stored) performed per time step in our implementations of
each of the initial guess methods.  Here, $M$ refers to the size of the history
space, $m$ is the polynomial degree used in extrapolation, and $\Ntotal$ is the
dimension of the linear systems.  By ``leading-order'', we mean that we assume
$\Ntotal \gg M$.  The count for CLASSIC($M$) is an average over one restart
cycle.  Counts for the other methods assume a full history space.}
\label{TAB:DataMoveCounts}
\end{table}

Details of the computations for the entries in this table may be found in
Appendix \ref{APP:OpCountDetails}.  The point is that extrapolation moves
considerably less data than the projection methods.  Asymptotically as $M$
becomes large, EXTRAP($m$,~$M$) moves 4 times less data per time step than
CLASSIC($M$) and 12 times less data than QR($M$).  For smaller $M$, the factors
will be even larger.  Sparse extrapolation moves even less data, depending on
the choice of polynomial degree.

In addition to moving less data and being simple to implement, extrapolation
has two other advantages that make it appealing.  First, it requires
approximately half the storage of the projection methods, as it needs only a
history space of solutions while the projection methods require a history space
of right-hand sides as well.  Second, the extrapolation methods are entirely
local in the sense that they do not require any communication to carry out in
parallel.  In particular, they do not require any global inner products or
global operator evaluations.  Projection methods, on the other hand, require
both:  an operator evaluation to find the right-hand side corresponding to the
approximate solution (see line \ref{ALL:ApplyOp} of Algorithm
\ref{ALG:FSRClassic}) and inner products both during orthogonalization and when
computing the projections.

\section{A GPU-Accelerated Incompressible Navier--Stokes Solver}
\label{SEC:INSSolver}

Our goal in this article is to implement these methods in a GPU-accelerated PDE
solver in such a way that they use the GPU hardware efficiently.  We consider
specifically a solver for the incompressible Navier--Stokes (INS) equations,
\begin{equation}\label{EQN:INS}
\begin{aligned}
\frac{\partial \bdu}{\partial t} -\nu \nabla^2 \bdu + \bdu \cdot \nabla \bdu + \nabla p &= 0 \\
\nabla \cdot \bdu &= 0,
\end{aligned}
\end{equation}
posed in a closed domain $\Omega$ in $\R^3$  Here, $\bdu = (u_x, u_y, u_z)$ is
the velocity field $p$ is the pressure, and $\nu$ is the kinematic viscosity.
We assume that $\partial \Omega$ may be written as $\partial \Omega_D \cup
\partial \Omega_N$, where $\partial \Omega_D$ and $\partial \Omega_N$ are
disjoint subsets of $\partial \Omega$ on which we impose the boundary
conditions
\[
\begin{aligned}
\bdu &= \bdg && \text{on $\Omega_D$} \\
\nu \nabla \bdu \cdot \hat{\bdn} &= p\hat{\bdn} && \text{on $\Omega_N$},
\end{aligned}
\]
respectively, where $\hat{\bdn}$ denotes the outward-pointing unit normal.
Standard manipulations lead to the variational formulation
\begin{equation}\label{EQN:INSVF}
\begin{aligned}
\frac{\partial}{\partial t}\int_\Omega \bdu \cdot \bdv \: dV + \nu \int_\Omega \nabla \bdu : \nabla \bdv \: dV  + \int_\Omega \bdu \cdot \nabla \bdu \: dV - \int_\Omega p(\nabla \cdot \bdv) \: dV &= 0 \\
\int_\Omega (\nabla \cdot \bdu)q &= 0,
\end{aligned}
\end{equation}
to be satisfied by the solution $\bdu$ for all test functions $\bdv$, $q$ of a
suitable regularity, where $\bdv$ vanishes on $\partial \Omega_D$.

The solver we use is a continuous Galerkin variant of the high-order
discontinuous Galerkin INS solver described in \cite{KCSW2019} and is freely
available as part of the \texttt{libParanumal} suite of flow solvers
\cite{ChalmersKarakusAustinSwirydowiczWarburton2020}.  For completeness, we
describe the discretization and the salient characteristics of the solver here.
Note that while we have described (and will continue to describe) everything in
terms of flow problems in three spatial dimensions, a two-dimensional solver
may be constructed following a similar prescription.

\subsection{Spatial Discretization}

We partition $\Omega$ into conforming, nonoverlapping elements, which we take
in this article to be hexahedra, though other shapes are also viable.  We map
each element to the reference cube $[-1, 1]^3$, on which we define a space of
multivariate polynomials consisting of tensor products of univariate
polynomials of degree at most $N$ in each spatial variable.  We represent
polynomials in this space by their values on a tensor product grid of
Gauss--Legendre--Lobatto (GLL) nodes, and we seek a piecewise continuous
approximation to the solution of \eqref{EQN:INS} in $\Omega$ formed by linear
combinations of these polynomials on each element.

Thus, on element $e$, we expand the local velocity $\bdu^e$ and pressure $p^e$
as
\[
\bdu^e(\bdx) = \sum_{i = 1}^{N_p} \bdu^e_i \ell_i(\bdx) \qquad\text{and}\qquad p^e(\bdx) = \sum_{i = 1}^{N_p} p_i^e\ell_i(\bdx),
\]
where $N_p = (N + 1)^3$ is the dimension of the local polynomial space (i.e.,
the number of tensor product GLL nodes) and $\ell_i$ is the $i^\textrm{th}$
basis function (a tensor product of Lagrange basis functions for the GLL nodes
in one dimension).  Inserting these expansions into the variational form
\eqref{EQN:INSVF} and imposing the Galerkin condition, we arrive the local
semidiscrete system
\[
\begin{aligned}
\frac{d\bdu^e}{dt} + \nu \bdL^e\bdu^e + \bdN^e(\bdu^e) - \bdG^ep^e &= 0 \\
D^e\bdu^e &= 0,
\end{aligned}
\]
where we abuse notation and identify $\bdu^e$ and $p^e$ with vectors of the
expansion coefficients $\bdu_i^e$ and $p_i^e$, and where $\bdL^e$, $\bdN^e$,
$\bdG^e$, and $D^e$ are the discrete (vector) Laplacian, advection, gradient,
and divergence operators, respectively.

Demanding that the solution be continuous across across elements, we assemble
the element-local semidiscrete systems into a global semidiscrete system
\begin{equation}\label{EQN:INSGlobalSemidiscrete}
\begin{aligned}
\frac{d\bdu}{dt} + \nu \bdL\bdu + \bdN(\bdu) - \bdG p &= 0 \\
D\bdu &= 0,
\end{aligned}
\end{equation}
where we again abuse notation, identifying $\bdu$ and $p$ with vectors of
expansion coefficients of the approximate solution in the global piecewise
polynomial basis.  That is, they are vectors of the values of the computed
solution at the union over all elements of the mapped tensor product GLL grids
with redundant points shared by multiple elements on their corners, edges, and
faces eliminated.  The operators $\bdL$, $\bdN$, $\bdG$, and $D$ are assembled
from the local operators $\bdL^e$, $\bdN^e$, $\bdG^e$, and $D^e$ in the usual
way.

Our implementation is ``matrix-free'':  we do not explicitly form the global
matrices $\bdL$, $\bdG$, and $D$, nor do we work directly with the nonlinear
global advection operator $\bdN$.  Instead, we effect the action of these
matrices on vectors by applying the local operators to vectors of the local
solution values on each element and then enforcing continuity by applying a
gather--scatter operation \cite[\S4.5.1]{DFM2002}, for which we use the
\texttt{gslib} software library \cite{gslib,FLPS2008,Tuf1998}.  The local
operators $\bdL^e$, $\bdG^e$, and $D^e$ are applied using quadrature on the
tensor product GLL grid on element $e$.  To combat aliasing due to nonlinearity
when evaluating the local advection operator $\bdN^e$, we first interpolate the
solution to a tensor-product grid of Gauss--Legendre (\emph{not} Lobatto)
quadrature points of size $(N + 2)^3$ and evaluate the integral with Gauss
quadrature.

\subsection{Temporal Discretization}

We discretize \eqref{EQN:INSGlobalSemidiscrete} in time in an
implicit--explicit fashion, using a $3^\textrm{rd}$-order backward
differentiation formula to treat the diffusive $\nu L\bdu$ term and an explicit
$3^\textrm{rd}$-order polynomial extrapolation scheme for the nonlinear
advective term $\bdN(\bdu)$.  If $\Delta t$ is the time step size, and if $u^n$
and $p^n$ denote the approximations to $\bdu$ and $p$ at the $n^\textrm{th}$
time step, we obtain a system of the form
\begin{equation}\label{EQN:INSGlobalFullyDiscrete}
\begin{aligned}
\bigl(\gamma I + \nu(\Delta t)\bdL\bigr)\bdu^{n + 1} - (\Delta t) \bdG p^{n + 1} &= \sum_{i = 0}^2 \beta_i \bdu^{n - i} - (\Delta t) \sum_{i = 0}^2 \alpha_i \bdN\bigl(\bdu^{n - i}\bigr) \\
D\bdu^{n + 1} &= 0
\end{aligned}
\end{equation}
for $\bdu^{n + 1}$ and $p^{n + 1}$, given the solution at the previous time
steps.  As these schemes are not self-starting, we initialize them using
similar schemes of lower-order.  The coefficients for the schemes may be found
in \cite[Table 5.2]{KS2005}.

Solving the fully-coupled system \eqref{EQN:INSGlobalFullyDiscrete} for
$\bdu^{n + 1}$ and $p^{n + 1}$ simultaneously can be expensive.  Instead, we
use an algebraic splitting scheme to decouple the solve for $\bdu^{n + 1}$ from
that for $p^{n + 1}$.  First, we evaluate the part of the right-hand side that
is known explicitly:
\[
\bdf = \sum_{i = 0}^2 \beta_i \bdu^{n - i} - (\Delta t) \sum_{i = 0}^2 \alpha_i \bdN\bigl(\bdu^{n - i}\bigr).
\]
Next, we obtain an initial approximation $\hat{\bdu}$ to $\bdu^{n + 1}$ by
solving the first equation in \eqref{EQN:INSGlobalFullyDiscrete} assuming that
$p^{n + 1} = 0$:
\begin{equation}\label{EQN:VelocityEqn}
\begin{aligned}
\left(\bdL + \frac{\gamma}{\nu (\Delta t)}I\right)\hat{\bdu} &= \frac{1}{\nu(\Delta t)}\bdf. \\
\end{aligned}
\end{equation}
This approximation does not satisfy the incompressibility constraint.  We
rectify this by taking $p^{n + 1}$ to solve
\begin{equation}\label{EQN:PressureEqn}
Lp^{n + 1} = -\frac{\gamma}{\Delta t}D\hat{\bdu}
\end{equation}
(here, $L$ is the matrix that discretizes the scalar Laplacian) and then
assigning
\[
\bdu^{n + 1} = \hat{\bdu} - \frac{\Delta t}{\gamma} \bdG p^{n + 1}.
\]

The size of the time step for this scheme is limited by the
Courant--Friedrichs--Lewy (CFL) condition associated with the advective term
$\bdN(\bdu)$ (since we have treated this term explicitly).  We use a
semi-Lagrangian subcycling scheme to alleviate this restriction.  For further
details, see \cite[\S2.3]{KCSW2019}

\subsection{Linear Solvers and Preconditioning}

We solve the elliptic systems \eqref{EQN:VelocityEqn} and
\eqref{EQN:PressureEqn} at each time step using the preconditioned conjugate
gradient (PCG) method.  We precondition the velocity system
\eqref{EQN:VelocityEqn} using a (point) Jacobi approach, which is inexpensive
and works well at high Reynolds numbers (small $\nu$).

For the pressure system \eqref{EQN:PressureEqn}, we use a hybrid $p$-multigrid
/ algebraic multigrid ($p$MG/AMG) preconditioner in which we coarsen the
problem (in space) first by using the same discretization but with successively
lower polynomial degrees.  If coarsening to degree 1 yields a problem that is
still too large to solve directly, we further coarsen the problem using an
unsmoothed-aggregation algebraic multigrid scheme.  We use
Chebyshev-accelerated damped-Jacobi smoothing at each level of the multigrid
hierarchy.  One application of the preconditioner consists of a single V-cycle.
For more information, see \cite[Ch.\ 5]{Gan2015} and \cite{KCSW2019}.

\section{Numerical Results and Performance}
\label{SEC:Numerics}

\subsection{Test Problem and Hardware}

We test the effectiveness of using right-hand side projection techniques within
the solver just described by using them to simulate 3D channel flow past a
rectangular ``fence'' obstacle.  The mesh is composed of 8,528 hexahedral
elements; a 2D cross-section is displayed in Figure \ref{FIG:FenceMesh}.  The
velocity field is required to satisfy zero Dirichlet conditions on the walls of
the channel and along the fence.  At the entrance of the channel, we prescribe
a uniform rightward inflow condition.  At outflow, we use a natural boundary
condition.

\begin{figure}
\centering
\includegraphics[scale = 0.20]{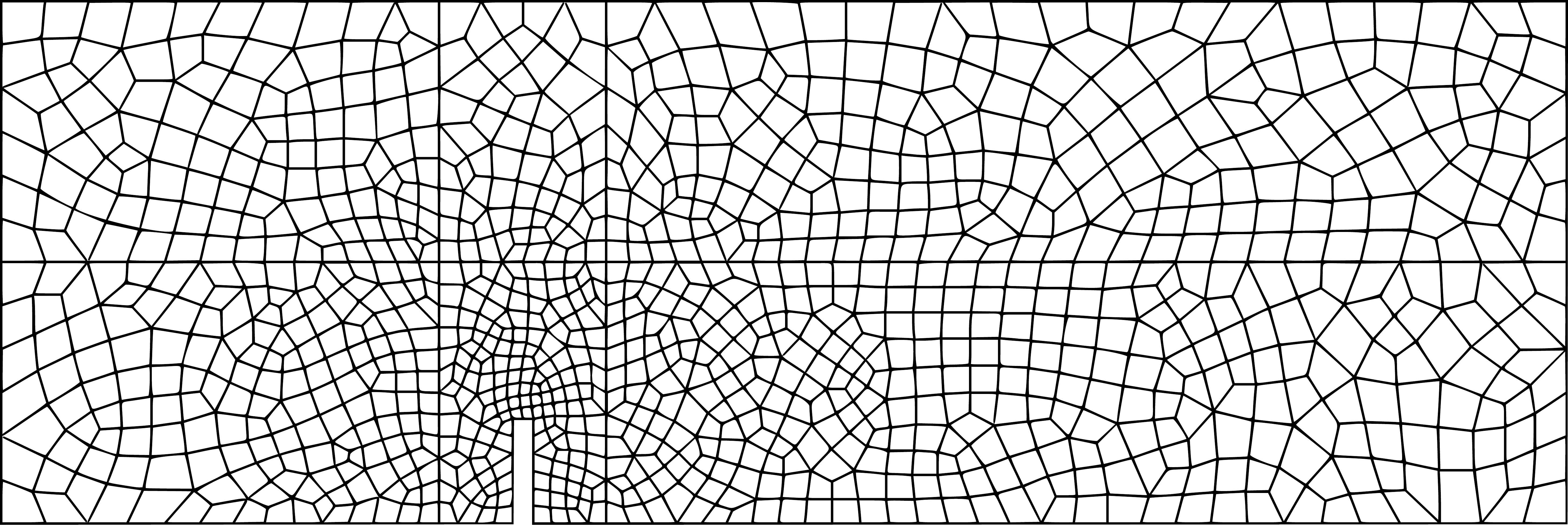}
\caption{Two-dimensional cross section of the finite element mesh for 3D
channel flow past a rectangular ``fence.''}
\label{FIG:FenceMesh}
\end{figure}

We perform all of our simulations on either a computer workstation equipped
with two NVIDIA TITAN-V GPUs or on a single node of the Cascades cluster at the
Advanced Research Computing facility at Virginia Tech.  One Cascades node is
equipped with two NVIDIA V100 GPUs.  Though we have multiple GPUs available in
both environments, for simplicity, all of our tests are confined to a single
GPU.

\subsection{GPU Efficiency}
\label{SSEC:GPUEfficiency}

We have implemented both the ``classic'' and ``rolling QR'' right-hand-side
projection techniques described in Section \ref{SEC:FSRReview} as well as the
extrapolation methods of Section \ref{SEC:Extrapolation} in the INS solver
described in the previous section.  We maintain separate history spaces of
right-hand sides for each of the three components of the velocity and for the
pressure.  The dimensions of each space may be selected independently from one
another, though we do not take advantage of this flexibility in any of our
experiments.

The projection methods require four key operations for which we have written
GPU kernels:%
\footnote{Our descriptions for the first three kernels reference lines in
Algorithm \ref{ALG:FSRClassic}.  Of course, they are used in Algorithm
\ref{ALG:FSRQR} too.}

\begin{itemize}

\item \emph{rhsProject} computes the inner products required to project the
right-hand side onto the existing basis, as is done in lines
\ref{ALL:fsrBasisInnerProducts1} and \ref{ALL:fsrBasisInnerProducts2} of
Algorithm \ref{ALG:FSRClassic}.  By blocking the inner products together, it
reduces data movement compared to the naive algorithm of computing the inner
products individually.  The inner products are computed using a pointwise
vector multiply followed by a standard binary reduction tree with an atomic add
at the end.

\item \emph{rhsReconstruct} adds or subtracts linear combinations of vectors in
the history space from a given, fixed vector.  This operation is needed in
lines \ref{ALL:fsrBasisInnerProducts1} (with the fixed vector equal to zero)
and \ref{ALL:fsrReconstruct} of Algorithm \ref{ALG:FSRClassic}.

\item \emph{rhsUpdateSpace} performs the operations on line \ref{ALL:fsrUpdate}
of Algorithm \ref{ALG:FSRClassic}, which store new vectors in the history
space.  Combining these operations, including the scalings by $\|\tilde{b}\|$,
into one kernel reduces data movement compared with performing the scalings
separately (e.g., using basic linear algebra routines) and then copying the
vectors into the spaces.

\item \emph{rhsQRUpdate} performs the QR update step in lines
\ref{ALL:fsrDropQRFirstColumn1}--\ref{ALL:fsrDropQRFirstColumn2} of Algorithm
\ref{ALG:FSRQR}.

\end{itemize}

The extrapolation methods, being considerably simpler, require just a single
kernel that computes the required linear combination of the history vectors; we
call this kernel \emph{extrapKernel}.

As mentioned above, these operations are memory-bound, and our kernels are
therefore best judged by the bandwidth they attain.  To test their performance,
we run 48 time steps of our 3D fence simulation on a single TITAN-V GPU using
polynomial degrees $N = 1, 2, \ldots 6$ and using both the classic and rolling
QR variants of the projection methods as well as the extrapolation methods with
degree $m = 2, 3, 4$.  We vary the maximum size $M$ of the history space,
trying $M = 4, 8, 12$.  On each run, we collect statistics about the kernels'
performance using NVIDIA's \texttt{nvprof} profiling utility.

These statistics are displayed in the roofline plots of Figure
\ref{FIG:RooflinePlots}, which shows the average bandwidth attained by the
kernels under each configuration as a function of the amount of data they move.
We obtained the roofline itself by measuring the bandwidth attained on a large
copy operation, which we found to be approximately 580 GB/s.%
\footnote{The datasheet for the TITAN-V indicates a theoretical peak bandwidth
of approximately 653 GB/s.}
The plots indicate that our kernels all operate close to the streaming
performance limit.

\begin{figure}[h!]
\centering
\subfloat{\begin{tikzpicture}[scale = 0.7]
  \begin{axis}[
    scale = 1.0,
    samples = 100,
    xmin = 0, xmax = 0.393090,
    title = rhsProject,
    xlabel = {Bytes read/written (GB)},
    ylabel = {Attained bandwidth (GB/s)},
  ]

  \addplot[thick, blue] file {figures/data/figRoofProject_Roof.dat};
  \addplot[mark = x, only marks, thick, red] file {figures/data/figRoofProject_Data.dat};

  \end{axis}
\end{tikzpicture}}
\subfloat{\begin{tikzpicture}[scale = 0.7]
  \begin{axis}[
    scale = 1.0,
    samples = 100,
    xmin = 0, xmax = 0.389834,
    title = rhsReconstruct,
    xlabel = {Bytes read/written (GB)},
    ylabel = {Attained bandwidth (GB/s)},
  ]

  \addplot[thick, blue] file {figures/data/figRoofReconstruct_Roof.dat};
  \addplot[mark = x, only marks, thick, red] file {figures/data/figRoofReconstruct_Data.dat};

  \end{axis}
\end{tikzpicture}} \\
\subfloat{\begin{tikzpicture}[scale = 0.7]
  \begin{axis}[
    scale = 1.0,
    samples = 100,
    xmin = 0, xmax = 0.194750,
    title = rhsUpdateSpace,
    xlabel = {Bytes read/written (GB)},
    ylabel = {Attained bandwidth (GB/s)},
  ]

  \addplot[thick, blue] file {figures/data/figRoofUpdateSpace_Roof.dat};
  \addplot[mark = x, only marks, thick, red] file {figures/data/figRoofUpdateSpace_Data.dat};

  \end{axis}
\end{tikzpicture}}
\subfloat{\begin{tikzpicture}[scale = 0.7]
  \begin{axis}[
    scale = 1.0,
    samples = 100,
    xmin = 0, xmax = 1.576220,
    title = rhsQRUpdate,
    xlabel = {Bytes read/written (GB)},
    ylabel = {Attained bandwidth (GB/s)},
  ]

  \addplot[thick, blue] file {figures/data/figRoofQRUpdate_Roof.dat};
  \addplot[mark = x, only marks, thick, red] file {figures/data/figRoofQRUpdate_Data.dat};

  \end{axis}
\end{tikzpicture}} \\
\subfloat{\begin{tikzpicture}[scale = 0.7]
  \begin{axis}[
    scale = 1.0,
    samples = 100,
    xmin = 0, xmax = 0.334442,
    title = extrapKernel,
    xlabel = {Bytes read/written (GB)},
    ylabel = {Attained bandwidth (GB/s)},
  ]

  \addplot[thick, blue] file {figures/data/figRoofExtrap_Roof.dat};
  \addplot[mark = x, only marks, thick, red] file {figures/data/figRoofExtrap_Data.dat};

  \end{axis}
\end{tikzpicture}}
\caption{Roofline plots showing the bandwidth attained by each of the four GPU
kernels described in Section \ref{SSEC:GPUEfficiency} used for implementing the
projection-based initial guess methods.  The red crosses indicate the bandwidth
attained by the kernel when the size of the input results in the given number
of memory transactions.  The blue line is an estimate of the maximum attainable
memory bandwidth, measured by timing a copy operation on a large array.}
\label{FIG:RooflinePlots}
\end{figure}
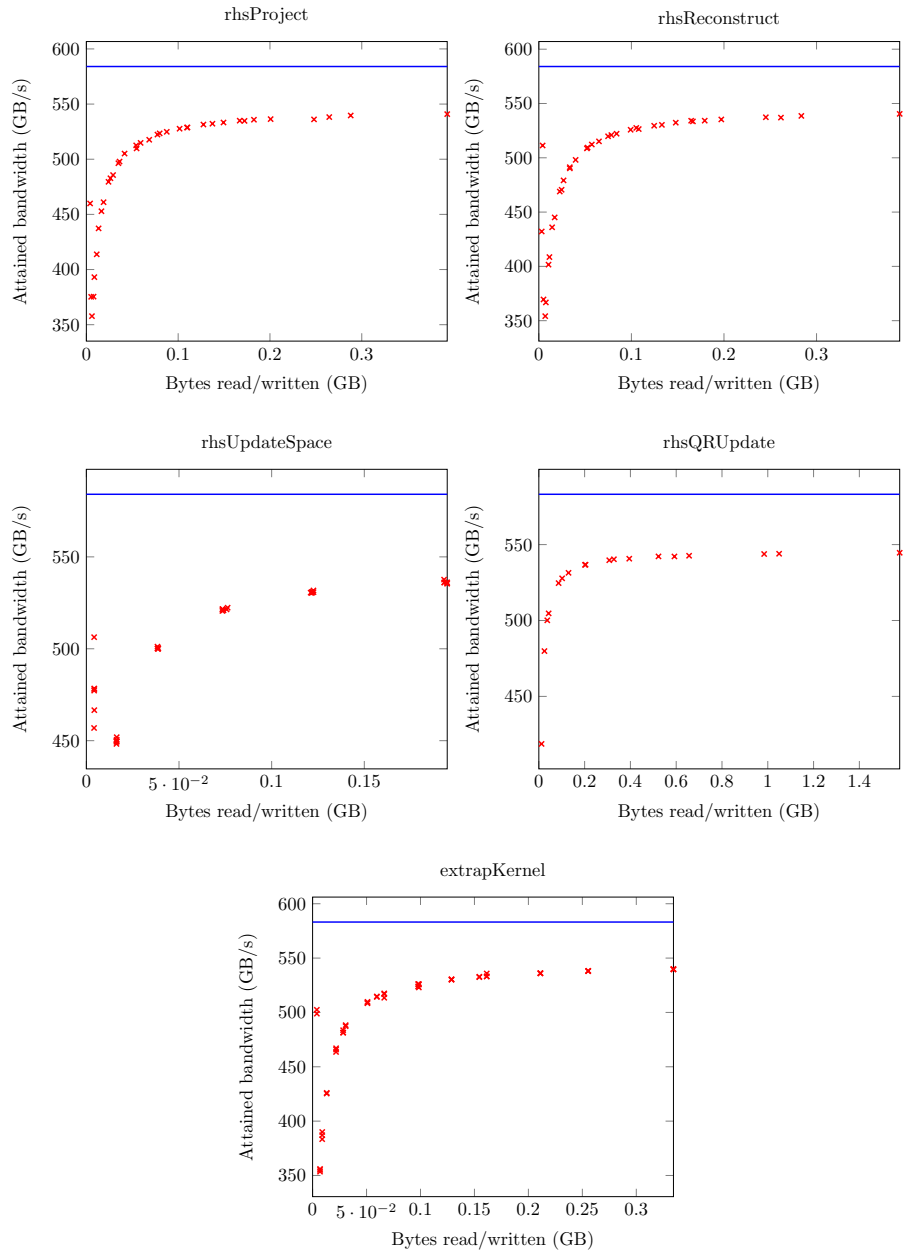

\subsection{Solver Iterations and Simulation Time}
\label{SSEC:SolverItersSimTime}

Computing initial guesses using either projection or extrapolation instead of
just using the solution at the previous time step generally results in
substantial reductions in both solver iterations (numbers of PCG steps) and
time to solution.  In fact, profiling reveals that, on average and for
``reasonable'' choices of the history space dimension, computing the projection
and forming the initial guess with the rolling QR method takes approximately
the same amount of time as one PCG iteration (including application of our
hybrid multigrid preconditioner), so it will be a net win in terms of time if
it saves just two iterations per solve.  Extrapolation is approximately ten
times faster still.

In general, these methods save considerably more than just two iterations.
Table \ref{TAB:FSRTimes} shows the time (in seconds) to run our solver on the
3D fence flow test problem described above to final time $t = 1.00$ (8737 time
steps) on a single NVIDIA V100 GPU.  We ran the solver for flows with Reynolds
numbers $\mathrm{Re} = 200, 500, 1000$ (generated by altering the value of the
viscosity $\nu$) and using nine different initial guess strategies:  the
solution at the previous time step, the classic projection method, the rolling
QR projection method, and both the least-squares and sparse extrapolation
methods with degrees $m = 2, 3, 4$.  For the projection and extrapolation
methods, we used history space dimensions $M = 4, 8, 12$.  The numbers in
parentheses indicate the speedup obtained with the projection methods relative
to using the solution at the last time step.

Even with a history space of dimension just $M = 4$ and the classic projection
scheme with its naive periodic restart procedure, we obtain a 35--40\% speedup
over using the previous time step's solution as an initial guess.  Switching to
the rolling QR scheme improves this to about 55-60\%.  While its implementation
is more complicated, the rolling QR scheme is clearly superior to the classic
one:  the classic scheme requires a history space of dimension $M = 12$ to
attain speedups that the rolling QR scheme is able to attain with $M = 4$.  We
note that $M = 8$ seems to be the dimension of diminishing returns for the
rolling QR method for this problem: there is no significant difference between
the times and speedups attained for $M = 8$ compared with $M = 12$; if
anything, $M = 12$ yields a method that is slightly slower.

The real winners here, however, are the extrapolation methods, which attain a
speedup of 70\% or greater in most cases and up to 85\% with some
configurations.  This improved time to solution comes in spite of the fact that
the projection methods, exemplified by the rolling QR scheme, attain lower
solver iteration counts, in particular for the pressure problem.  The average
PCG iteration counts per time step required for the velocity and pressure
solves are presented in Tables \ref{TAB:FSRVelocityIters} and
\ref{TAB:FSRPressureIters}, respectively.

As an example, Table \ref{TAB:FSRPressureIters}, shows that for $\mathrm{Re} =
200$, QR($8$) performs roughly 15\% fewer iterations than EXTRAP($2$,~$8$) when
solving the pressure problem, yet the latter is actually about 10\% faster
according to Table \ref{TAB:FSRTimes}.  Part of this is due to the fact that
EXTRAP($2$,~$8$) performs about 5\% fewer iterations on average than QR($8$)
for the velocity problems, but it is also due to the fact that EXTRAP($2$,~$8$)
does not have to perform the (relatively) expensive update step that QR($8$)
does.

We observe that, in general, the extrapolation methods tend to yield slightly
lower iteration counts than the projection methods for the velocity problems,
while for the pressure problems, the opposite is true.  The reason for this
difference is not immediately clear.  Note that reducing pressure iterations is
in general more valuable than reducing velocity iterations, as a pressure
iteration is considerably more expensive to effect than a velocity iteration
owing to the more sophisticated preconditioner.

\begin{table}
\centering
\begin{tabular}{c l | r r | r r | r r}
\toprule
\multicolumn{2}{c |}{Method} & \multicolumn{2}{c |}{$\mathrm{Re} = 200$} & \multicolumn{2}{c |}{$\mathrm{Re} = 500$} & \multicolumn{2}{c}{$\mathrm{Re} = 1000$} \\
\midrule
\multicolumn{2}{c |}{Last time step} & 2629 & (1.00) & 2586 & (1.00) & 2568 & (1.00) \\
\midrule\multirow{3}{*}{Classic} & $M = 4$ & 1872 & (1.40) & 1875 & (1.38) & 1901 & (1.35) \\
& $M = 8$ & 1721 & (1.53) & 1712 & (1.51) & 1748 & (1.47) \\
& $M = 12$ & 1673 & (1.57) & 1666 & (1.55) & 1701 & (1.51) \\
\midrule\multirow{3}{*}{Rolling QR} & $M = 4$ & 1620 & (1.62) & 1643 & (1.57) & 1651 & (1.56) \\
& $M = 8$ & 1589 & (1.65) & 1591 & (1.63) & 1644 & (1.56) \\
& $M = 12$ & 1625 & (1.62) & 1637 & (1.58) & 1677 & (1.53) \\
\midrule
\multirow{3}{*}{\shortstack{Extrapolation \\ (Least-sq., $m = 2$)}} & $M = 4$ & 1527 & (1.72) & 1519 & (1.70) & 1538 & (1.67) \\
& $M = 8$ & 1432 & (1.84) & 1434 & (1.80) & 1459 & (1.76) \\
& $M = 12$ & 1418 & (1.85) & 1421 & (1.82) & 1460 & (1.76) \\
\midrule
\multirow{3}{*}{\shortstack{Extrapolation \\ (Least-sq., $m = 3$)}} & $M = 4$ & 1692 & (1.55) & 1683 & (1.54) & 1677 & (1.53) \\
& $M = 8$ & 1478 & (1.78) & 1475 & (1.75) & 1475 & (1.74) \\
& $M = 12$ & 1414 & (1.86) & 1408 & (1.84) & 1411 & (1.82) \\
\midrule
\multirow{3}{*}{\shortstack{Extrapolation \\ (Least-sq., $m = 4$)}} & $M = 4$ & \multicolumn{1}{c}{---} & \multicolumn{1}{c |}{---} & \multicolumn{1}{c}{---} & \multicolumn{1}{c |}{---} & \multicolumn{1}{c}{---} & \multicolumn{1}{c}{---} \\
& $M = 8$ & 1589 & (1.65) & 1577 & (1.64) & 1570 & (1.64) \\
& $M = 12$ & 1474 & (1.78) & 1464 & (1.77) & 1468 & (1.75) \\
\midrule
\multirow{3}{*}{\shortstack{Extrapolation \\ (Sparse, $m = 2$)}} & $M = 4$ & 1531 & (1.72) & 1524 & (1.70) & 1540 & (1.67) \\
& $M = 8$ & 1420 & (1.85) & 1424 & (1.82) & 1448 & (1.77) \\
& $M = 12$ & 1405 & (1.87) & 1413 & (1.83) & 1440 & (1.78) \\
\midrule
\multirow{3}{*}{\shortstack{Extrapolation \\ (Sparse, $m = 3$)}} & $M = 4$ & 1693 & (1.55) & 1682 & (1.54) & 1677 & (1.53) \\
& $M = 8$ & 1492 & (1.76) & 1483 & (1.74) & 1483 & (1.73) \\
& $M = 12$ & 1416 & (1.86) & 1412 & (1.83) & 1418 & (1.81) \\
\midrule
\multirow{3}{*}{\shortstack{Extrapolation \\ (Sparse, $m = 4$)}} & $M = 4$ & \multicolumn{1}{c}{---} & \multicolumn{1}{c |}{---} & \multicolumn{1}{c}{---} & \multicolumn{1}{c |}{---} & \multicolumn{1}{c}{---} & \multicolumn{1}{c}{---} \\
& $M = 8$ & 1610 & (1.63) & 1595 & (1.62) & 1594 & (1.61) \\
& $M = 12$ & 1489 & (1.77) & 1476 & (1.75) & 1481 & (1.73) \\
\bottomrule
\end{tabular}
\caption{Simulation times (in seconds) for the 3D fence flow test problem to a
final time of $t = 1.00$ (approximately 8700 time steps) for each of the
initial guess strategies.  The parameter $M$ is the maximum dimension of the
history space used by the methods.  The numbers in parentheses give the
speedups attained by the methods relative to the strategy of using the solution
at the last time step.}
\label{TAB:FSRTimes}
\end{table}

\begin{table}
\centering
\begin{tabular}{c l | r r | r r | r r}
\toprule
\multicolumn{2}{c |}{Method} & \multicolumn{2}{c |}{$\mathrm{Re} = 200$} & \multicolumn{2}{c |}{$\mathrm{Re} = 500$} & \multicolumn{2}{c}{$\mathrm{Re} = 1000$} \\
\midrule
\multicolumn{2}{c |}{Last time step} & 26.00 & (1.00) & 19.68 & (1.00) & 15.36 & (1.00) \\
\midrule\multirow{3}{*}{Classic} & $M = 4$ & 10.71 & (2.43) & 8.63 & (2.28) & 7.96 & (1.93) \\
& $M = 8$ & 7.61 & (3.41) & 5.86 & (3.36) & 5.52 & (2.78) \\
& $M = 12$ & 6.40 & (4.06) & 4.94 & (3.98) & 4.70 & (3.26) \\
\midrule\multirow{3}{*}{Rolling QR} & $M = 4$ & 5.23 & (4.97) & 4.46 & (4.42) & 3.82 & (4.02) \\
& $M = 8$ & 6.15 & (4.23) & 5.59 & (3.52) & 4.91 & (3.13) \\
& $M = 12$ & 6.04 & (4.30) & 5.61 & (3.51) & 5.16 & (2.98) \\
\midrule
\multirow{3}{*}{\shortstack{Extrapolation \\ (Least-sq., $m = 2$)}} & $M = 4$ & 5.14 & (5.06) & 4.49 & (4.38) & 5.49 & (2.80) \\
& $M = 8$ & 5.81 & (4.48) & 6.15 & (3.20) & 6.46 & (2.38) \\
& $M = 12$ & 6.81 & (3.82) & 6.79 & (2.90) & 6.95 & (2.21) \\
\midrule
\multirow{3}{*}{\shortstack{Extrapolation \\ (Least-sq., $m = 3$)}} & $M = 4$ & 5.56 & (4.67) & 4.68 & (4.20) & 3.92 & (3.92) \\
& $M = 8$ & 4.07 & (6.39) & 3.74 & (5.26) & 3.34 & (4.60) \\
& $M = 12$ & 3.76 & (6.91) & 3.55 & (5.55) & 3.54 & (4.34) \\
\midrule
\multirow{3}{*}{\shortstack{Extrapolation \\ (Least-sq., $m = 4$)}} & $M = 4$ & \multicolumn{1}{c}{---} & \multicolumn{1}{c |}{---} & \multicolumn{1}{c}{---} & \multicolumn{1}{c |}{---} & \multicolumn{1}{c}{---} & \multicolumn{1}{c}{---} \\
& $M = 8$ & 5.05 & (5.14) & 4.10 & (4.79) & 3.15 & (4.88) \\
& $M = 12$ & 3.92 & (6.63) & 3.19 & (6.17) & 3.21 & (4.78) \\
\midrule
\multirow{3}{*}{\shortstack{Extrapolation \\ (Sparse, $m = 2$)}} & $M = 4$ & 5.13 & (5.07) & 4.76 & (4.13) & 5.62 & (2.73) \\
& $M = 8$ & 5.23 & (4.97) & 5.85 & (3.36) & 6.40 & (2.40) \\
& $M = 12$ & 6.07 & (4.28) & 6.49 & (3.03) & 6.73 & (2.28) \\
\midrule
\multirow{3}{*}{\shortstack{Extrapolation \\ (Sparse, $m = 3$)}} & $M = 4$ & 5.55 & (4.68) & 4.65 & (4.23) & 3.92 & (3.92) \\
& $M = 8$ & 4.22 & (6.17) & 3.52 & (5.59) & 3.32 & (4.62) \\
& $M = 12$ & 3.80 & (6.84) & 3.37 & (5.84) & 3.43 & (4.47) \\
\midrule
\multirow{3}{*}{\shortstack{Extrapolation \\ (Sparse, $m = 4$)}} & $M = 4$ & \multicolumn{1}{c}{---} & \multicolumn{1}{c |}{---} & \multicolumn{1}{c}{---} & \multicolumn{1}{c |}{---} & \multicolumn{1}{c}{---} & \multicolumn{1}{c}{---} \\
& $M = 8$ & 4.98 & (5.22) & 3.72 & (5.30) & 3.16 & (4.86) \\
& $M = 12$ & 4.42 & (5.89) & 3.25 & (6.06) & 3.21 & (4.79) \\
\bottomrule
\end{tabular}
\caption{Average number of velocity solver PCG iterations at each time step for
the same problem as in Table \ref{TAB:FSRTimes}.  The numbers in parentheses
give the factor by which the various methods reduce the number of iterations
relative to using the solution at the last time step as an initial guess.}
\label{TAB:FSRVelocityIters}
\end{table}

\begin{table}
\centering
\begin{tabular}{c l | r r | r r | r r}
\toprule
\multicolumn{2}{c |}{Method} & \multicolumn{2}{c |}{$\mathrm{Re} = 200$} & \multicolumn{2}{c |}{$\mathrm{Re} = 500$} & \multicolumn{2}{c}{$\mathrm{Re} = 1000$} \\
\midrule
\multicolumn{2}{c |}{Last time step} & 14.77 & (1.00) & 15.19 & (1.00) & 15.60 & (1.00) \\
\midrule\multirow{3}{*}{Classic} & $M = 4$ & 6.57 & (2.25) & 6.91 & (2.20) & 7.28 & (2.14) \\
& $M = 8$ & 4.67 & (3.16) & 4.81 & (3.16) & 5.26 & (2.97) \\
& $M = 12$ & 3.81 & (3.87) & 3.92 & (3.87) & 4.35 & (3.59) \\
\midrule\multirow{3}{*}{Rolling QR} & $M = 4$ & 3.60 & (4.11) & 3.96 & (3.84) & 4.07 & (3.83) \\
& $M = 8$ & 2.31 & (6.40) & 2.36 & (6.44) & 3.03 & (5.16) \\
& $M = 12$ & 1.83 & (8.06) & 2.02 & (7.53) & 2.50 & (6.25) \\
\midrule
\multirow{3}{*}{\shortstack{Extrapolation \\ (Least-sq., $m = 2$)}} & $M = 4$ & 4.07 & (3.63) & 4.05 & (3.75) & 4.08 & (3.83) \\
& $M = 8$ & 2.70 & (5.47) & 2.64 & (5.75) & 2.86 & (5.45) \\
& $M = 12$ & 2.28 & (6.47) & 2.29 & (6.63) & 2.71 & (5.75) \\
\midrule
\multirow{3}{*}{\shortstack{Extrapolation \\ (Least-sq., $m = 3$)}} & $M = 4$ & 6.07 & (2.43) & 6.07 & (2.50) & 6.05 & (2.58) \\
& $M = 8$ & 3.55 & (4.16) & 3.54 & (4.29) & 3.55 & (4.40) \\
& $M = 12$ & 2.71 & (5.46) & 2.65 & (5.74) & 2.63 & (5.94) \\
\midrule
\multirow{3}{*}{\shortstack{Extrapolation \\ (Least-sq., $m = 4$)}} & $M = 4$ & \multicolumn{1}{c}{---} & \multicolumn{1}{c |}{---} & \multicolumn{1}{c}{---} & \multicolumn{1}{c |}{---} & \multicolumn{1}{c}{---} & \multicolumn{1}{c}{---} \\
& $M = 8$ & 4.78 & (3.09) & 4.76 & (3.19) & 4.77 & (3.27) \\
& $M = 12$ & 3.44 & (4.30) & 3.40 & (4.46) & 3.40 & (4.59) \\
\midrule
\multirow{3}{*}{\shortstack{Extrapolation \\ (Sparse, $m = 2$)}} & $M = 4$ & 4.14 & (3.57) & 4.09 & (3.72) & 4.10 & (3.81) \\
& $M = 8$ & 2.74 & (5.40) & 2.67 & (5.69) & 2.82 & (5.52) \\
& $M = 12$ & 2.40 & (6.15) & 2.42 & (6.27) & 2.67 & (5.84) \\
\midrule
\multirow{3}{*}{\shortstack{Extrapolation \\ (Sparse, $m = 3$)}} & $M = 4$ & 6.08 & (2.43) & 6.05 & (2.51) & 6.06 & (2.58) \\
& $M = 8$ & 3.77 & (3.92) & 3.75 & (4.05) & 3.73 & (4.19) \\
& $M = 12$ & 2.87 & (5.14) & 2.87 & (5.29) & 2.89 & (5.40) \\
\midrule
\multirow{3}{*}{\shortstack{Extrapolation \\ (Sparse, $m = 4$)}} & $M = 4$ & \multicolumn{1}{c}{---} & \multicolumn{1}{c |}{---} & \multicolumn{1}{c}{---} & \multicolumn{1}{c |}{---} & \multicolumn{1}{c}{---} & \multicolumn{1}{c}{---} \\
& $M = 8$ & 5.12 & (2.89) & 5.10 & (2.98) & 5.12 & (3.05) \\
& $M = 12$ & 3.68 & (4.01) & 3.68 & (4.13) & 3.69 & (4.23) \\
\bottomrule
\end{tabular}
\caption{Average number of pressure solver PCG iterations at each time step for
the same problem as in Table \ref{TAB:FSRTimes}.  The numbers in parentheses
give the factor by which the various methods reduce the number of iterations
relative to using the solution at the last time step as an initial guess.}
\label{TAB:FSRPressureIters}
\end{table}

Focusing on the pressure solves, Figure \ref{SFIG:FSRItersVsTimeStep1} shows
the number of pressure iterations taken at several time steps during the
$\mathrm{Re} = 500$ run for the various strategies, using a space of dimension
$M = 8$.  The projection methods begin to produce substantially lower iteration
counts versus using the solution at the last time step after about 10 time
steps, while the extrapolation methods take closer to 20 time steps to do the
same.  The plot shows the stark difference between the classic and rolling QR
schemes:  every 8 steps, the iteration count for the classic scheme jumps to
match that for using the previous time step's solution, reflecting the fact
that it periodically discards the history space.  In contrast, the rolling QR
scheme maintains counts that are consistently low.  After about 45 time steps,
the iteration counts for EXTRAP(4,~8) reach the same level as rolling QR.  The
counts for EXTRAP(2,~8) and EXTRAP(3,~8) are somewhat higher but remain on a
downward trend.

Figure \ref{SFIG:FSRItersVsTimeStep2} illustrates the long-term behavior of the
pressure solver iteration counts over the course of the full simulation,
displaying the average at each time step of the iteration counts over all the
time steps taken to that point.  While all of the strategies for selecting
initial guesses result in iteration counts that trend downward with time, the
projection and extrapolation methods yield average iteration counts that are
considerably lower.  After 8700 time steps, pressure solves seeded with the
previous time step's solution as the initial guess take about 15 PCG iterations
on average.  For the classic projection scheme with $M = 8$, the long-term
average is just under 5 iterations per solve.  For the rolling QR scheme with
$M = 8$, it is about 2.5.  EXTRAP(2,~8) and EXTRAP(3,~8) compete closely with
QR(8), with long-term averages of about 2.6 and 3.5, respectively.

\begin{figure}
\centering
\subfloat[]{\label{SFIG:FSRItersVsTimeStep1}\begin{tikzpicture}[scale = 0.7]
  \begin{axis}[
    scale = 1.0,
    samples = 100,
    xmin = 0, xmax = 80,
    ymin = 0, ymax = 30,
    xlabel = {Time step},
    ylabel = {No.\ of pressure PCG iterations},
    legend style = { at = {(0.99, 0.99)}, anchor = north east, nodes = {scale = 0.5} },
    legend cell align = left
  ]

  \addplot[mark = x, thick, cLA] file {figures/data/figItersVsTimeStep_LA.dat};
  \addplot[mark = asterisk, thick, cCL] file {figures/data/figItersVsTimeStep_CL.dat};
  \addplot[mark = *, thick, cQR, mark options = {scale = 0.5}] file {figures/data/figItersVsTimeStep_QR.dat};
  \addplot[mark = o, thick, cE2] file {figures/data/figItersVsTimeStep_E2.dat};
  \addplot[mark = square, thick, cE3] file {figures/data/figItersVsTimeStep_E3.dat};
  \addplot[mark = triangle, thick, cE4] file {figures/data/figItersVsTimeStep_E4.dat};

  \legend{LAST, CLASSIC(8), QR(8), {EXTRAP(2, 8)}, {EXTRAP(3, 8)}, {EXTRAP(4, 8)}};

  \end{axis}
\end{tikzpicture}}
\subfloat[]{\label{SFIG:FSRItersVsTimeStep2}\begin{tikzpicture}[scale = 0.7]
  \begin{axis}[
    scale = 1.0,
    samples = 100,
    xmin = 0, xmax = 8737,
    ymin = 0, ymax = 30,
    xlabel = {Time step},
    ylabel = {Cum.\ avg.\ no.\ of pressure PCG iterations},
    legend style = { at = {(0.99, 0.99)}, anchor = north east, nodes = {scale = 0.5} },
    legend cell align = left
  ]

  \addplot[thick, cLA] file {figures/data/figCumAvgIters_LA.dat};
  \addplot[dashed, thick, cCL] file {figures/data/figCumAvgIters_C4.dat};
  \addplot[thick, cCL] file {figures/data/figCumAvgIters_C8.dat};
  \addplot[dashed, thick, cQR] file {figures/data/figCumAvgIters_Q4.dat};
  \addplot[thick, cQR] file {figures/data/figCumAvgIters_Q8.dat};
  \addplot[dashed, thick, cE2] file {figures/data/figCumAvgIters_E2_4.dat};
  \addplot[thick, cE2] file {figures/data/figCumAvgIters_E2_8.dat};
  \addplot[dashed, thick, cE3] file {figures/data/figCumAvgIters_E3_4.dat};
  \addplot[thick, cE3] file {figures/data/figCumAvgIters_E3_8.dat};

  \legend{LAST, CLASSIC(4), CLASSIC(8), QR(4), QR(8), {EXTRAP(2, 4)}, {EXTRAP(2, 8)}, {EXTRAP(3, 4)}, {EXTRAP(3, 8)}};

  \end{axis}
\end{tikzpicture}}
\caption{(a) Pressure solver PCG iteration counts at the first 80 time steps
for the 3D fence flow test problem at $\mathrm{Re} = 500$ for several initial
guess strategies.  (b) Cumulative average pressure iteration counts over the
entire duration of the same simulation to a final time of $t = 1.00$
(approximately 8700 time steps).}
\label{FIG:FSRItersVsTimeStep}
\end{figure}
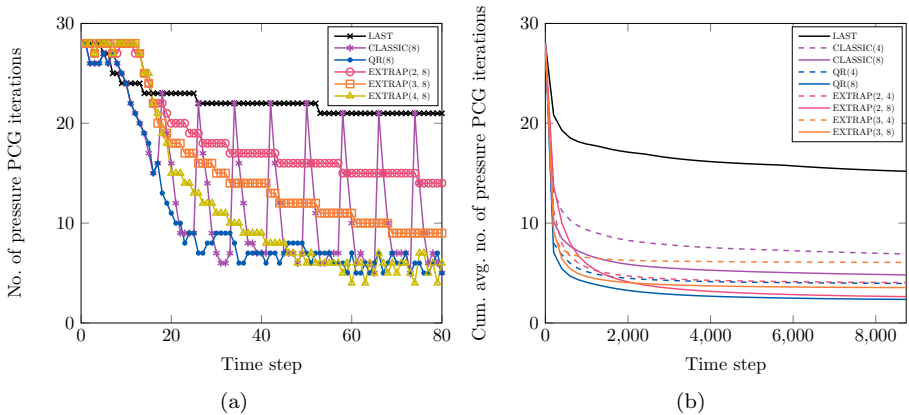

The performance of the sparse extrapolation methods is nearly indistinguishable
from that of the least-squares methods, though the pressure iteration counts
for the sparse methods are, in general, very slightly higher.  This validates
our claim made in Section \ref{SSEC:SparseExtrap} that even though the vector
of extrapolation coefficients for the sparse method does not satisfy a
minimality property like that for the least-squares method, the sparse method
is still a viable approach.

\subsection{Effect of the Stopping Criterion}

The results just reported depend to some extent on the stopping criterion used
for the Krylov solver.  In \texttt{libParanumal}, the default is to stop the
iteration when
\begin{equation}\label{EQN:StopCritINITRESID}
\|r\|_2 < \varepsilon \max\bigl(\|r_0\|_2, 1\bigr),
\end{equation}
where $r$ is the current residual, $r_0$ is the initial residual, and
$\varepsilon$ is a chosen tolerance.  By default, \texttt{libParanumal} uses
$\varepsilon = 10^{-8}$.  This criterion stops iterating when the residual has
been reduced to $\varepsilon$ in a relative sense when the initial residual is
large and in an absolute sense when the initial residual is small.  The results
of the previous section were found using this default criterion.

One alternative to this criterion would be to demand
\begin{equation}\label{EQN:StopCritRHS}
\|r\|_2 < \varepsilon \max\bigl(\|b\|_2, 1\bigr),
\end{equation}
i.e., to ask for a reduction relative not to the size of the initial residual
but to the size of the right-hand side $b$.  The main difference between
\eqref{EQN:StopCritINITRESID} and \eqref{EQN:StopCritRHS} is that if the
initial guess is good ($\|r_0\|_2$ is small), the former criterion will almost
always collapse to $\|r\|_2 < \varepsilon$, while the latter will typically be
$\|r\|_2 < \varepsilon \|b\|_2$, as $\|b\|_2 > 1$ in most cases in the flow
simulations we consider.  That means \eqref{EQN:StopCritRHS} can be a less
stringent requirement than \eqref{EQN:StopCritINITRESID} with a good initial
guess.

We shall not attempt to determine which of the two criteria is ``superior,'' as
this depends on one's accuracy requirements.  Instead, let us examine the
effect of the choice of criterion on the efficacy of the initial guess methods.
Table \ref{TAB:CompareStopCrit} shows the times to solution for our test
problem with $\mathrm{Re} = 500$ for each of the initial guess methods for both
stopping criteria with $\varepsilon = 10^{-8}$.  As the table shows, criterion
\eqref{EQN:StopCritRHS} results in shorter solution times, and the more
sophisticated initial guess methods at less effective at reducing the time
compared to running under criterion \eqref{EQN:StopCritINITRESID}.  Still, even
under \eqref{EQN:StopCritRHS}, the extrapolation methods are able to attain
speedups comparable to (and even slightly better than) those attained by the
projection methods.

Unless otherwise indicated, we will continue to use criterion
\eqref{EQN:StopCritINITRESID} with $\varepsilon  = 10^{-8}$ in what follows.

\begin{table}
\centering
\begin{tabular}{c l | r r | r r}
\toprule
\multicolumn{2}{c |}{Method} & \multicolumn{2}{c |}{Criterion \eqref{EQN:StopCritINITRESID}} & \multicolumn{2}{c}{Criterion \eqref{EQN:StopCritRHS}} \\
\midrule
\multicolumn{2}{c |}{Last time step} & 2586 & (1.00) & 1990 & (1.00) \\
\midrule
\multirow{3}{*}{Classic} & $M = 4$ & 1875 & (1.38) & 1791 & (1.11) \\
& $M = 8$ & 1712 & (1.51) & 1676 & (1.19) \\
& $M = 12$ & 1666 & (1.55) & 1651 & (1.21) \\
\midrule
\multirow{3}{*}{Rolling QR} & $M = 4$ & 1643 & (1.57) & 1526 & (1.30) \\
& $M = 8$ & 1591 & (1.63) & 1570 & (1.27) \\
& $M = 12$ & 1637 & (1.58) & 1658 & (1.20) \\
\midrule
\multirow{3}{*}{\shortstack{Extrapolation \\ (Least-sq., $m = 2$)}} & $M = 4$ & 1519 & (1.70) & 1496 & (1.33) \\
& $M = 8$ & 1434 & (1.80) & 1413 & (1.41) \\
& $M = 12$ & 1421 & (1.82) & 1355 & (1.47) \\
\midrule
\multirow{3}{*}{\shortstack{Extrapolation \\ (Least-sq., $m = 3$)}} & $M = 4$ & 1683 & (1.54) & 1909 & (1.04) \\
& $M = 8$ & 1475 & (1.75) & 1467 & (1.36) \\
& $M = 12$ & 1408 & (1.84) & 1398 & (1.42) \\
\midrule
\multirow{3}{*}{\shortstack{Extrapolation \\ (Least-sq., $m = 4$)}} & $M = 4$ & \multicolumn{1}{c}{---} & \multicolumn{1}{c |}{---} & \multicolumn{1}{c}{---} & \multicolumn{1}{c}{---} \\
& $M = 8$ & 1577 & (1.64) & 1582 & (1.26) \\
& $M = 12$ & 1464 & (1.77) & 1464 & (1.36) \\
\bottomrule
\end{tabular}
\caption{Simulation times (in seconds) for the 3D fence flow test problem at
$\mathrm{Re} = 500$ to a final time of $t = 1.00$ (approximately 8700 time
steps) for each of the initial guess strategies using the two stopping criteria
\eqref{EQN:StopCritINITRESID} and \eqref{EQN:StopCritRHS} with $\varepsilon =
10^{-8}$.  The numbers in parentheses give the speedups attained by the methods
relative to the strategy of using the solution at the last time step.}
\label{TAB:CompareStopCrit}
\end{table}

\subsection{Comparison with Naive Extrapolation}
\label{SSEC:NaiveExtrapExperiments}

It is worth dwelling briefly on how the stabilized extrapolation methods we
have proposed compare with naive extrapolation via Lagrange interpolation as
described in Section \ref{SSEC:LagrangeExtrap}.  As we wrote above, this method
is known to perform poorly as the polynomial degree increases.  This is
confirmed by the numbers in Table \ref{TAB:CompareNaive}, which show a steady
degradation in both solution time and pressure solver iterations when applied
to our test problem with $\mathrm{Re} = 500$.  Comparing with Tables
\ref{TAB:FSRTimes} and \ref{TAB:FSRPressureIters}, we see, for instance, that
EXTRAP(5,~6) is clearly inferior to EXTRAP(2,~4), resulting in more than double
the number of pressure iterations per time step on average despite a 50\%
larger history space.

\begin{table}
\centering
\begin{tabular}{c r r r r}
\toprule
Method & \multicolumn{2}{c}{Solution time} & \multicolumn{2}{c}{Pressure iters.} \\
\midrule
Last time step & 2586 & (1.00) & 15.19 & (1.00) \\
\midrule
EXTRAP(2, 3) & 1589 & (1.63) & 4.98 & (3.05) \\
EXTRAP(3, 4) & 1683 & (1.54) & 6.07 & (2.50) \\
EXTRAP(4, 5) & 1795 & (1.44) & 7.34 & (2.07) \\
EXTRAP(5, 6) & 1905 & (1.36) & 8.62 & (1.76) \\
\bottomrule
\end{tabular}
\caption{Solution times and average pressure solver iteration counts for each
of four extrapolation methods based on naive polynomial interpolation applied
to the fence flow test problem with $\mathrm{Re} = 500$.  The numbers in
parentheses give the improvements attained by the methods relative to the
strategy of using the solution at the last time step.}
\label{TAB:CompareNaive}
\end{table}

The reason for this poor performance is related to the well-understood bad
behavior of polynomial interpolation in \equispaced{} points.  We can quantify
this poor behavior by introducing the notion of a \emph{Lebesgue constant} for
the extrapolation schemes, which is just a suitable norm of the linear operator
that maps the history data to the extrapolated value.  Conceptually, the
Lebesgue constant for a scheme measures the extent to which it amplifies
perturbations to the history data, such as those resulting from round-off error
or the fact that the linear systems are solved only as accurately as demanded
by the Krylov solver tolerance.

In our case, the appropriate definition for the Lebesgue constant $\Lambda$
associated to the extrapolation scheme with coefficient vector $\beta^T$ is
\[
\Lambda = \max_{\|x\|_\infty = 1} |\beta^T x| = \|\beta\|_1,
\]
i.e., the sum of the absolute values of the extrapolation coefficients.  The
following theorem is easy to prove:

\begin{theorem}
The Lebesgue constant for the naive {\rm EXTRAP($M - 1$,~$M$)} extrapolation
scheme is $\Lambda = 2^M - 1$.
\end{theorem}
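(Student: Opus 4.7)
The proof should be essentially a one-line calculation once the closed-form expression for the coefficients is in hand. The plan is to invoke the formula \eqref{EQN:NaiveExtrapCoeffs} from the earlier theorem, namely
\[
\beta_i = (-1)^{M-i}\binom{M}{i-1}, \qquad 1 \leq i \leq M,
\]
and then simply compute the $\ell^1$ norm of the coefficient vector.

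First I would observe that the signs in \eqref{EQN:NaiveExtrapCoeffs} strictly alternate with $i$, so $|\beta_i| = \binom{M}{i-1}$ for each $i$. Summing and reindexing with $k = i - 1$ gives
\[
\Lambda = \|\beta\|_1 = \sum_{i=1}^M \binom{M}{i-1} = \sum_{k=0}^{M-1}\binom{M}{k}.
\]
Next I would apply the standard binomial identity $\sum_{k=0}^{M}\binom{M}{k} = 2^M$ and peel off the $k = M$ term, yielding
\[
\sum_{k=0}^{M-1}\binom{M}{k} = 2^M - \binom{M}{M} = 2^M - 1,
\]
which is the claimed value.

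There is no real obstacle here: the only nontrivial ingredient is the closed-form expression for the $\beta_i$, and that is granted by the earlier theorem (whose proof the paper defers to Newton's forward difference formula). The remainder is a routine binomial sum. If one wanted to make the argument fully self-contained without invoking \eqref{EQN:NaiveExtrapCoeffs}, one could instead derive the $\beta_i$ from scratch by evaluating the Lagrange basis polynomials for the equi-spaced nodes $t_i = -1 + (i-1)h$ at $t_{n+1} = 1 + h$ and recognizing the resulting product as a signed binomial coefficient, but given the paper's structure this extra work is unnecessary.
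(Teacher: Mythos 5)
Your proposal is correct and matches the paper's proof exactly: both invoke the closed-form coefficients \eqref{EQN:NaiveExtrapCoeffs} and reduce the Lebesgue constant to the binomial sum $\sum_{i=1}^{M}\binom{M}{i-1} = 2^M - 1$. The paper simply states this in one line, whereas you spell out the sign-alternation and reindexing steps.
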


\begin{proof}
By \eqref{EQN:NaiveExtrapCoeffs},
\[
\|\beta\|_1 = \sum_{i = 1}^M \binom{M}{i - 1} = 2^M - 1.
\]
\end{proof}

This explosive exponential growth of the Lebesgue constant is what ultimately
renders the naive extrapolation scheme useless in practice.  As Table
\ref{TAB:CompareNaive} illustrates, $M$ does not even need to be very large
before the negative impacts are felt.

The Lebesgue constants for the stabilized schemes do not have convenient closed
forms, but we can compute them easily.  Figure \ref{FIG:LebesgueConstants}
plots the Lebesgue constants as a function of $M$ for the native scheme, the
stable scheme EXTRAP($\lfloor\sqrt{M}\rfloor$,~$M$), and the corresponding
sparse stable scheme, where $\lfloor\cdot\rfloor$ denotes the ``floor''
(greatest integer) function.  The Lebesgue constants for the stabilized methods
do grow but at a considerably slower rate than does the Lebesgue constant for
the naive scheme.

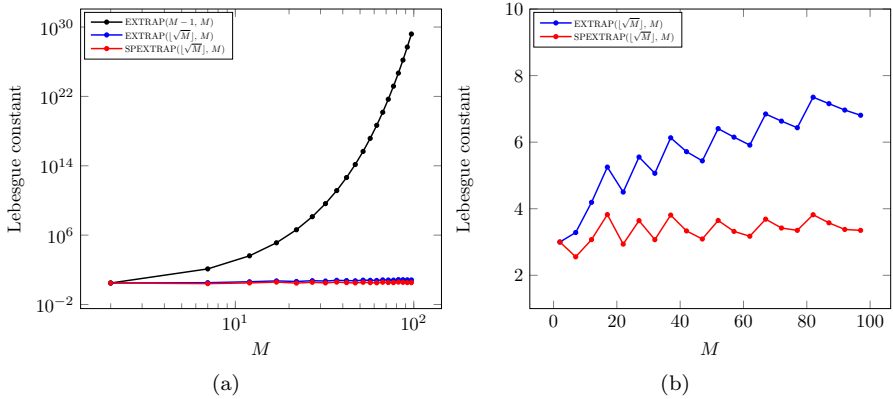
\begin{figure}
\centering
\subfloat[]{\begin{tikzpicture}[scale = 0.7]
  \begin{loglogaxis}[
    scale = 1.0,
    samples = 100,
    xlabel = {$M$},
    ylabel = {Lebesgue constant},
    legend style = { at = {(0.01, 0.99)}, anchor = north west, nodes = {scale = 0.5} },
    legend cell align = left
  ]

  \addplot[mark = *, mark options = {scale = 0.5}, thick, black] file {figures/data/figLebConstNaive.dat};
  \addplot[mark = *, mark options = {scale = 0.5}, thick, blue]  file {figures/data/figLebConstLSQR.dat};
  \addplot[mark = *, mark options = {scale = 0.5}, thick, red]   file {figures/data/figLebConstCPQR.dat};

  \legend{{EXTRAP($M - 1$, $M$)}, {EXTRAP($\lfloor\sqrt{M}\rfloor$, $M$)}, {SPEXTRAP($\lfloor\sqrt{M}\rfloor$, $M$)}}
  \end{loglogaxis}
\end{tikzpicture}}
\subfloat[]{\begin{tikzpicture}[scale = 0.7]
  \begin{axis}[
    scale = 1.0,
    samples = 100,
    ymin = 1, ymax = 10,
    xlabel = {$M$},
    ylabel = {Lebesgue constant},
    legend style = { at = {(0.01, 0.99)}, anchor = north west, nodes = {scale = 0.5} },
    legend cell align = left
  ]

  \addplot[mark = *, mark options = {scale = 0.5}, thick, blue]  file {figures/data/figLebConstLSQR.dat};
  \addplot[mark = *, mark options = {scale = 0.5}, thick, red]   file {figures/data/figLebConstCPQR.dat};

  \legend{{EXTRAP($\lfloor\sqrt{M}\rfloor$, $M$)}, {SPEXTRAP($\lfloor\sqrt{M}\rfloor$, $M$)}}
  \end{axis}
\end{tikzpicture}}
\caption{(a) Lebesgue constants as a function of history space dimension $M$ for
the naive extrapolation scheme, a stable extrapolation scheme, and its sparse
counterpart.  Note the logarithmic scaling of the axes. (b) Same plot on linear axes, omitting the data for the naive extrapolation scheme.}
\label{FIG:LebesgueConstants}
\end{figure}

\subsection{Reynolds Number Dependence}

Table \ref{TAB:FSRTimes} suggests that the effectiveness of the initial guess
methods degrades at least mildly as the Reynolds number increases.  To
investigate this, we ran our solver on our 3D fence flow test problem for
several flows with Reynolds numbers up to $\mathrm{Re} = \text{7,000}$ and
measured the average number of iterations taken by the pressure solver.  The
results are displayed in Figure \ref{FIG:FSRItersVsRe}.  We observe that,
indeed, the iteration counts do rise for both methods as the Reynolds number
increases.

We also ran simulations at the same Reynolds numbers that used a zero initial
guess at all time steps.  The average pressure iteration counts for these
simulations are nearly steady at approximately 24 PCG steps per solve,
irrespective of Reynolds number.  This shows that our preconditioner is
$\mathrm{Re}$-independent (or at most only very weakly $\mathrm{Re}$-dependent)
and hence that $\mathrm{Re}$ dependence of the iteration counts of the initial
guess methods is due to the methods themselves and how they interact with the
equations \eqref{EQN:INS} and our discretization, not our preconditioner.

At $\mathrm{Re} = \text{7,000}$, the effectiveness of the initial guess
strategy of using the previous time step's solution is questionable, as it
saves just 4 iterations per solve on average compared with using a zero initial
guess.  While the projection and extrapolation schemes' effectiveness at
$\mathrm{Re} = \text{7,000}$ is diminished compared with low Reynolds numbers,
they still take fewer than half the number of iterations as a zero initial
guess.

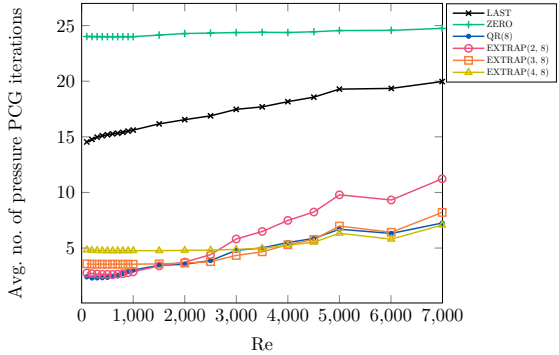
\begin{figure}
\centering
\begin{tikzpicture}[scale = 0.7]
  \begin{axis}[
    scale = 1.0,
    samples = 100,
    xmin = 0, xmax = 7000,
    xlabel = {Re},
    ylabel = {Avg.\ no.\ of pressure PCG iterations},
    legend style = { at = {(1.01, 1)}, anchor = north west, nodes = {scale = 0.5} },
    legend cell align = left
  ]

  \addplot[mark = x, thick, cLA] file {figures/data/figItersVsRe_LA.dat};
  \addplot[mark = +, thick, cZE] file {figures/data/figItersVsRe_ZE.dat};
  \addplot[mark = *, mark options = {scale = 0.5}, thick, cQR] file {figures/data/figItersVsRe_QR.dat};
  \addplot[mark = o, thick, cE2] file {figures/data/figItersVsRe_E2.dat};
  \addplot[mark = square, thick, cE3] file {figures/data/figItersVsRe_E3.dat};
  \addplot[mark = triangle, thick, cE4] file {figures/data/figItersVsRe_E4.dat};

  \legend{LAST, ZERO, QR(8), {EXTRAP(2, 8)}, {EXTRAP(3, 8)}, {EXTRAP(4, 8)}}

  \end{axis}
\end{tikzpicture}
\caption{Average number of pressure solver PCG iterations for the 3D fence flow
test problem, run to a final time of $t = 1.00$ (approximately 8700 time steps)
for flows with Reynolds numbers varying from 200 to 7,000 for several initial
guess strategies.  Legend labels correspond to Table \ref{TAB:DataMoveCounts}.
``ZERO'' gives the results obtained using a zero initial guess for the pressure
solution at each time step.}
\label{FIG:FSRItersVsRe}
\end{figure}

\subsection{Using Different Methods for Velocity and Pressure}

The experiments reported in the previous sections all use the same initial
guess scheme for both the velocity and the pressure solves.  In fact, it may be
advantageous to use different schemes for each.  Table \ref{TAB:MixedResults}
shows the results obtained on our test problem with $\mathrm{Re} = 500$ using
the EXTRAP(3,~8) scheme for velocity and several different schemes for pressure
except for the first row, which gives the results for using the solution at the
last time step as the initial guess for both problems.

Comparing with Table \ref{TAB:FSRTimes}, we see that when the initial guesses
for pressure are found using an extrapolation scheme, using EXTRAP(3,~8) for
velocity leads to essentially the same results as using the same extrapolation
scheme for velocity as for pressure.  On the other hand, using EXTRAP(3,~8) for
velocity and the rolling QR projection scheme for pressure is a marked
improvement over using the rolling QR scheme for both.  The reason is that the
extrapolation scheme produces lower velocity iteration counts, cf. Table
\ref{TAB:FSRVelocityIters}.  Despite this improvement, however, schemes that
use extrapolation for both velocity and pressure still attain slightly better
times.

Table \ref{TAB:MixedResults} shows only a small subset of the possible
combinations.  We leave the determination of the ``best'' combination as a
matter for future work.

\begin{table}
\centering
\begin{tabular}{c l | r r | r r | r r}
\toprule
\multicolumn{2}{c |}{Method} & \multicolumn{2}{c |}{Solution time} & \multicolumn{2}{c |}{Velocity iters.} & \multicolumn{2}{c}{Pressure iters.} \\
\midrule
\multicolumn{2}{c |}{Last time step} & 2586 & (1.00) & 19.68 & (1.00) & 15.19 & (1.00) \\
\midrule
\multirow{3}{*}{Rolling QR} & $M = 4$ & 1540 & (1.68) & 3.83 & (5.14) & 3.93 & (3.86) \\
& $M = 8$ & 1442 & (1.79) & 3.85 & (5.11) & 2.39 & (6.36) \\
& $M = 12$ & 1437 & (1.80) & 3.78 & (5.20) & 2.06 & (7.36) \\
\midrule
\multirow{3}{*}{\shortstack{Extrapolation \\ (Least-sq., $m = 2$)}} & $M = 4$ & 1512 & (1.71) & 3.75 & (5.25) & 4.02 & (3.78) \\
& $M = 8$ & 1408 & (1.84) & 3.83 & (5.14) & 2.68 & (5.68) \\
& $M = 12$ & 1381 & (1.87) & 3.80 & (5.18) & 2.32 & (6.54) \\
\midrule
\multirow{3}{*}{\shortstack{Extrapolation \\ (Least-sq., $m = 3$)}} & $M = 4$ & 1672 & (1.55) & 3.77 & (5.22) & 6.01 & (2.53) \\
& $M = 8$ & 1476 & (1.75) & 3.74 & (5.26) & 3.54 & (4.29) \\
& $M = 12$ & 1406 & (1.84) & 3.77 & (5.21) & 2.64 & (5.76) \\
\midrule
\multirow{3}{*}{\shortstack{Extrapolation \\ (Least-sq., $m = 4$)}} & $M = 4$ & \multicolumn{1}{c}{---} & \multicolumn{1}{c |}{---} & \multicolumn{1}{c}{---} & \multicolumn{1}{c |}{---} & \multicolumn{1}{c}{---} & \multicolumn{1}{c}{---} \\
& $M = 8$ & 1571 & (1.65) & 3.76 & (5.23) & 4.73 & (3.21) \\
& $M = 12$ & 1467 & (1.76) & 3.82 & (5.16) & 3.40 & (4.47) \\
\bottomrule
\end{tabular}
\caption{Simulation times (in seconds) for the 3D fence flow test problem to a
final time of $t = 1.00$ (approximately 8700 time steps) using EXTRAP(3,~8) for
velocity and the listed initial guess strategies for pressure except for the
first row, which gives the results for using the solution at the last time step
as the initial guess for both velocity and pressure.  The numbers in
parentheses give the improvements attained by the methods relative to the first
row.}
\label{TAB:MixedResults}
\end{table}

\section{Conclusion}
\label{SEC:Conclusion}

We have studied the effectiveness of projection and extrapolation methods for
computing initial guesses for the solutions to linear systems that arise within
numerical PDE solvers.  We have proposed stabilized extrapolation techniques
based on least-squares polynomial approximation and polynomial interpolation in
well-chosen (non-\equispaced) points and shown that both they and projection
techniques can be implemented efficiently on GPUs.  In spite of their
simplicity, we have found the proposed extrapolation methods to perform
comparably to---and, in some cases, slightly better than---equivalent
projection methods while requiring less storage, moving less data, and
performing fewer operations that require global communication.  In future work,
we plan to study the impact of these techniques on the scalability of
GPU-accelerated PDE solvers across multiple GPUs and compute nodes.

\section*{Acknowledgments}

We acknowledge Advanced Research Computing at Virginia Tech for providing the
computational resources with which we conducted the bulk of our experiments.

\appendix

\section{Details of Data Operation Counts}
\label{APP:OpCountDetails}

In this section, we provide details of the reasoning used to arrive at the data
movement operation counts in Table \ref{TAB:DataMoveCounts}.  For the
projection methods, the reader may find it helpful to refer to Algorithms
\ref{ALG:FSRClassic} and \ref{ALG:FSRQR}.  All vectors are of length $\Ntotal$.
As discussed above, we assume that $\Ntotal \gg M$ so that terms involving only
$M$ can be neglected.

First, we note the following basic counts:

\begin{itemize}

\item Taking a linear combination of $M$ vectors requires loading $M\Ntotal$
vector entries and storing $\Ntotal$, for a total of $(M + 1)\Ntotal$
operations.

\item If, at the same time, we add that linear combination to a given vector,
then we must also load that vector, bringing the count to $(M + 2)\Ntotal$.

\item Computing the inner products of $M$ vectors with a given vector requires
loading $(M + 1)\Ntotal$ values  and storing the $M$ inner products.
Neglecting the latter, we have $(M + 1)\Ntotal$ operations.

\item Applying the Laplacian operator in our matrix-free implementation
requires loading the input and storing the output ($2\Ntotal$ values moved) and
loading the geometric factors ($7\Ntotal$ values moved in 3D).  In all, this
comes to $9\Ntotal$ data movement operations.

\end{itemize}

\subsection{Forming the Initial Guess}

The projection methods form their initial guesses by taking the inner product
of the new right-hand side with the vectors in the right-hand side history
space.  With a space of dimension $M$, this comes to $(M + 1)\Ntotal$
operations.  They then take the computed inner products and use them for
coefficients in a linear combination of $M$ solution vectors.  That takes
another $(M + 1)\Ntotal$ operations for a total of $2(M + 1)\Ntotal$.  Once the
history space is full, QR($M$) will use this many operations every time, but
CLASSIC($M$) doesn't always use a space of size exactly $M$:  it uses spaces of
size $1$, \ldots, $M$ in a cycle.  Averaging the data movement operations
involved in forming the initial guess over these sizes, we obtain a count $(M +
3)\Ntotal$ for CLASSIC($M$).

EXTRAP($m$,~$M$) forms the initial guess from a linear combination of $M$
vectors for a total operation count of $(M + 1)\Ntotal$.  SPEXTRAP($m$,~$M$)
does the same but uses only $m + 1$ vectors.

\subsection{Updating the History Space}

The projection methods update their spaces using twice-iterated Gram--Schmidt.
One round of Gram--Schmidt first computes the inner products of the $M$ vectors
against the new vector, taking $(M + 1)\Ntotal$ operations.  It then uses those
inner products to subtract a linear combination of the $M$ vectors from the new
vector, taking an additional $(M + 2)\Ntotal$ operations.  The
orthogonalization is only done to the right-hand side space, but the same
linear combination must be carried out with the corresponding vectors in the
solution history space.  That takes another $(M + 2)\Ntotal$ operations, for a
total of $(3M + 5)\Ntotal$.

Twice-iterated Gram--Schmidt therefore takes $(6M + 10)\Ntotal$ operations.
Prior to running Gram--Schmidt, the projection operations apply the operator
once, which takes $9\Ntotal$ operations.  To finish the update, we have to
compute the norm of the newly projected right-hand side vector, which performs
one more load of length $\Ntotal$.  We then scale the new right-hand side and
corresponding solution vectors by this norm and store them in the history
arrays, which takes another $4\Ntotal$ movement operations (load and store two
vectors), ignoring the need to load the norm itself.  Thus, the total amount of
data moved during this part of the update process is $(6M + 24)\Ntotal + 6M$.
To obtain the count for CLASSIC($M$), we simply average this count over the
restart cycle like we did before, giving $(3M + 27)\Ntotal$.

In addition to the twice-iterated Gram--Schmidt update, QR($M$) must do the QR
update at every time step.  This requires loading the $M \times M$ matrix $R$,
which we neglect.  Then, we apply a sequence of $M - 1$ Givens rotations, each
of which operates on a pair of columns from the right-hand side history matrix.
It would seem that each rotation requires $4\Ntotal$ data movement operations:
load the two columns to be processed and then store the results, but by being
keeping the column in common used by two successive rotations in registers, we
can cut this in half to $2\Ntotal$ operations per rotation.  We have to do the
same operations on the solution space, which brings us back to $4\Ntotal$
operations per rotation, giving a total of $4(M - 1)\Ntotal$ operations for the
whole sequence of Givens sweeps.  Adding in the $2\Ntotal$ loads for the
initial columns and the $2\Ntotal$ stores for zeroing out the final columns, we
get a total of $4M\Ntotal$ operations.  Adding the counts for the
twice-iterated Gram--Schmidt and QR update operations together, QR($M$)
requires $(10M + 24)\Ntotal + M^2 + 6M$ data movement operations to update its
solution space.

The extrapolation methods do not require any operations to update the history
space, provided that the linear solver stores the solutions in place in the
history array.

\bibliographystyle{siamplain}
\bibliography{main}

\end{document}